\documentclass[preprint,3p,compress,12pt]{elsarticle}

\usepackage{amscd}
\usepackage{amssymb}
\usepackage{amsmath}
\usepackage{graphicx,multirow}
\usepackage{array}
\usepackage{subfigure }
\usepackage[misc]{ifsym}
\usepackage{amsfonts}
\usepackage{latexsym}
\usepackage{color}
\usepackage{booktabs}
\usepackage{algorithm}
\usepackage{algorithmicx}
\usepackage{hyperref}

\usepackage{placeins}

\numberwithin{equation}{section}
\newtheorem{expl}{Example}[section]

\newtheorem{theorem}{Theorem}[section]
\newtheorem{lemma}[theorem]{Lemma}

\newtheorem{rem}[theorem]{Remark}

 \numberwithin{figure}{section}

\begin{document}
\begin{frontmatter}

\title{\textbf{Full- and low-rank exponential Euler integrators for the Lindblad equation}}
\author[1]{Hao Chen}
\ead{hch@cqnu.edu.cn}
\author[2]{Alfio Borz\`{i}}
\ead{alfio.borzi@mathematik.uni-wuerzburg.de}
\author[3]{Denis Jankovi\'{c}}
\ead{denis.jankovic@ipcms.unistra.fr}
\author[3]{Jean-Gabriel Hartmann}
\ead{jeangabriel.hartmann@ipcms.unistra.fr}
\author[3]{Paul-Antoine Hervieux}
\ead{paul-antoine.hervieux@ipcms.unistra.fr}

\address[1]{College of Mathematics Science, Chongqing Normal University, Chongqing, China.}
\address[2]{Institut f\"{u}r Mathematik,
Universit\"{a}t W\"{u}rzburg, W\"{u}rzburg, Germany.}
\address[3]{Institut de Physique et Chimie des Mat\'{e}riaux de Strasbourg (IPCMS), Universit\'{e} de Strasbourg, Strasbourg, France.}

\begin{abstract}
The Lindblad equation is a widely used quantum master equation to model the dynamical evolution of open quantum systems whose states are described by density matrices. These solution matrices are characterized
by semi-positiveness and trace preserving properties, which must be guaranteed
in any physically meaningful numerical simulation.
In this paper, novel full- and low-rank exponential Euler integrators are developed for approximating the Lindblad equation that preserve positivity and trace unconditionally.
Theoretical results are presented that provide sharp error estimates for
the two classes of exponential integration methods.
Results of numerical experiments are discussed that illustrate the
effectiveness of the proposed schemes, beyond present state-of-the-art
capabilities.
\end{abstract}
\begin{keyword}
{Open quantum systems, Lindblad equation, positivity and trace preservation, exponential integrators, error analysis}
%% PACS codes here, in the form: \PACS code \sep code
%% MSC codes here, in the form: \MSC code \sep code
%\MSC 65F10 \sep 65L06 \sep 65M20
%% or \MSC[2008] code \sep code (2000 is the default)
\end{keyword}

\end{frontmatter}

\section{Introduction}
The Lindblad equation is a widely used Markovian quantum master equation to model the dynamical evolution of open quantum systems \cite{Breuer,Davies1976}. The quantum master equation plays a crucial role in many application areas including condensed matter physics, quantum physics and quantum information theory. In this paper, we consider quantum systems consisting of $K$ dephasing $d$-level qudits undergoing Markovian open quantum dynamics. The time dynamics of these systems is given by the Gorini-Kossakowski-Sudarshan-Lindblad equation (or Lindblad equation for short) \cite{Gorini,Lindblad}:
\begin{equation}\label{equ1.1}
  \dot{\rho}(t)=-i[H,\rho(t)]+\sum_{k=1}^K\gamma_k\left(L_k\rho(t) L_k^{\dag}-\frac{1}{2}\left\{L_k^{\dag}L_k,\rho(t)\right\}\right),
\end{equation}
where $\rho(t)\in \mathbb{C}^{m\times m}$ is the density matrix describing the state of the system, $H=H^{\dag}$ is the Hamiltonian operator describing the unitary evolution of the qudit, $L_k$ are the Lindblad or jump operators characterizing the dissipation channels, and $\gamma_k\geq0$ are the decay parameters for each of the $K$ channels.

The Lindblad equation can be also written in a vectorized form,
using Dirac's bracket notation, as follows:
\begin{equation}\label{equ1.2}
  |\dot{\rho}(t)\rangle=\mathcal{L}|\rho(t)\rangle,
\end{equation}
with
\begin{equation*}
  \mathcal{L}=-i I\otimes H+i H^\top\otimes I+\sum_{k=1}^K\gamma_k
  \left(L_k^*\otimes L_k-\frac{1}{2}I\otimes (L_k^{\dag}L_k)-\frac{1}{2}(L_k^\top L_k^*)\otimes I\right),
\end{equation*}
where $|\rho\rangle$ is the vectorized density matrix and $\mathcal{L}\in \mathbb{C}^{m^2\times m^2}$ is the matrix representation of the Lindblad generator. The superscripts $\dag$, $\top$ and $*$ denote the adjoint, transpose and complex conjugate operators, respectively.

The Lindblad equation possesses two important properties \cite{Gorini,Lindblad}:
it is positivity and trace preserving, which means that the solution $\rho(t)$ is Hermitian positive semidefinite and has unit trace if the initial data is Hermitian positive semidefinite with unit trace. These properties of the density
matrix are of fundamental physical significance, and whether they
can be preserved at the discrete level is a significant issue in numerical simulations, and especially for long-time simulation \cite{Riesch}. The major goal of this work is to develop positivity and trace preserving numerical schemes for approximating the Lindblad equation \eqref{equ1.1} with guaranteed accuracy.

The literature on numerical methods for solving the Lindblad equation is
relatively scarce. Explicit Runge-Kutta methods have been discussed in \cite{Riesch}, where the authors show that these methods do not preserve the positivity of the density matrix. The Crank-Nicolson method has been studied in \cite{Ziolkowski} for a two-level Lindblad equation and it is shown in \cite{Bidegaray} that the Crank-Nicolson scheme can not preserve the positivity in general. In fact, standard Runge-Kutta methods and multistep methods cannot be better than first-order accurate if they must
preserve positivity; see, e.g., \cite{Blanes2022,Martiradonna2020}. On the other hand, there are methods based on matrix exponential \cite{Riesch,Riesch1,Saut} for solving the vector representation \eqref{equ1.2} of the Lindblad equations. This kind of methods are positivity and trace preserving since the differential equation \eqref{equ1.2} is solved analytically. However, the coefficient matrix $\mathcal{L}$ is of size $m^2\times m^2$ and the computation of the product of the associated matrix exponential with vectors is expensive if $m^2$ is large. Further, there are methods based on operator splitting techniques \cite{Songolo1,Songolo2} and low-rank operator splitting schemes \cite{LeBris1,LeBris2}, where by means of a normalization procedure the low-rank splitting schemes preserve positivity and trace; however, rigorous error analysis is still unavailable. Moreover, Kraus representation approximation methods have been studied in \cite{Cao}, where it is shown that these methods are positivity and trace preserving,
where the latter is achieved by a normalization procedure. In addition, a decomposition of nonunitary operators approach has been developed in \cite{Schlimgen} for the vectorized Lindblad equation.

Besides the above mentioned approaches, several numerical methods have been studied for large-scale problems \cite{Weimer,Werner}, including methods based on matrix product states \cite{Orus,Schollwock,Vidal1,Vidal2} and strategies based on neural networks \cite{Hartmann,Nagy,Reh}. Tree tensor networks \cite{Arceci,Sulz} have also been developed to simulate quantum many-body states.

Noting that, in general, positivity and trace preserving schemes are relatively less considered and rigorous error analysis on numerical integrators for the Lindbald equation is largely unavailable, the aim of this paper is to develop and analyze positivity and trace preserving numerical schemes for solving the Lindblad equation \eqref{equ1.1}. Inspired by the exponential integrators for solving differential matrix Riccati equations \cite{Chen1,Chen2}, we present novel full- and low-rank exponential Euler integrators for the Lindblad equation. We prove that our exponential Euler schemes preserve positivity and trace unconditionally, that is, the new methods are unconditionally stable. However, while the full-rank scheme
does not require any normalization procedure to preserve unit trace, in the low-rank
case normalization is still required. On the other hand, trace preservation
in the full-rank case allows us to prove sharp accuracy estimates that
are also instrumental for obtaining the accuracy estimates for the low-rank
method presented in this paper. Clearly, one disadvantage of the low-rank approach
is to introduce an additional approximation error due to the low-rank representation
of the density matrix. On the other hand, we have the benefit of drastically reducing computational costs as we show in the section of numerical experiments.

This paper is organized as follows. In Section 2, we introduce our full-rank exponential Euler integrator for discretizing the Lindblad equation and discuss related implementation issues. In Section 3, we show that the proposed full-rank exponential scheme preserves positivity and unit trace of the solution unconditionally. Section 4 is dedicated to the error analysis of the full-rank exponential Euler integrator. In Section 5, we propose the low-rank exponential Euler scheme and provide its error estimate. In Section 6, many numerical experiments are performed for the Lindblad equation with various Hamiltonians, including the tests of convergence rate, positivity and unit trace preservation, and effectiveness of the low-rank algorithm. Also in this section,
we compare our exponential Euler schemes with the Lindblad solvers of the
well-known QuTip framework for simulation of open quantum systems \cite{Johansson}, showing that, while the latter may provide higher accuracy, they seem not able
to provide the required positivity property, which is guaranteed by the proposed
full- and low-rank exponential Euler schemes. Moreover, it is shown that
the new low-rank exponential scheme performs much faster than
all other schemes considered in this paper, as the dimensionality of the problem increases.
A section of conclusion completes this work.

\section{An exponential Euler integrator}
In this section, we develop an exponential integrator to solve the Lindblad equation \eqref{equ1.1} on the time interval $[0,T]$, for some $T>0$, equipped with the initial condition
$\rho(0)=\rho_0$, where $\rho_0$ is a Hermitian and positive semidefinite matrix with unit trace.
First, let us define
\begin{equation}\label{equ2.1}
  A:=- i \, H-\frac{1}{2}\sum_{k=1}^K\gamma_k \, L_k^{\dag} \, L_k.
\end{equation}
Then, we can rewrite \eqref{equ1.1} as follows:
\begin{equation}\label{equ2.2}
  \dot{\rho}=A \, \rho+\rho \,A^{\dag}+\sum_{k=1}^K\gamma_k \, L_k \, \rho \,L_k^{\dag}.
\end{equation}
Given a positive integer $N$, we divide the time interval by $N$
subintervals of size $\tau=\frac{T}{N}$, with endpoints
$t_n=n\tau$, $n=0,1,\ldots,N$. Then, integrating the equation \eqref{equ2.2} from $t_n$ to $t_{n+1}$ and using the variation-of-constants formula yields
\begin{equation}\label{equ2.3}
  \rho(t_{n+1})=e^{\tau A}\rho(t_n)e^{\tau A^{\dag}}+\sum_{k=1}^K\gamma_k\int_0^{\tau}e^{(\tau-s)A}L_k \, \rho(t_n+s) \, L_k^{\dag}e^{(\tau-s)A^{\dag}}ds.
\end{equation}
Applying this formula again for $\rho(t_n+s)$ within the integral, we obtain
\begin{eqnarray}\label{equ2.4}
  \rho(t_{n+1})=e^{\tau A} \rho(t_n) \, e^{\tau A^{\dag}}+\sum_{k=1}^K\gamma_k
  \int_0^{\tau}e^{(\tau-s)A}L_k \, e^{sA} \, \rho(t_n) \, e^{sA^{\dag}} \, L_k^{\dag} \, e^{(\tau-s)A^{\dag}}ds+R_{n,1},
\end{eqnarray}
where
\begin{equation}\label{equ2.4b}
  R_{n,1}=\sum_{k=1}^K\sum_{j=1}^K\gamma_k\gamma_j\int_0^{\tau}e^{(\tau-s)A}L_k
  \left(\int_0^se^{(s-v)A}L_j \, \rho(t_n+v) \, L_j^{\dag} \, e^{(s-v)A^{\dag}}dv\right)
  L_k^{\dag} \, e^{(\tau-s)A^{\dag}}ds.
\end{equation}
By truncating the term $R_{n,1}$ and approximating the matrix exponentials by $e^{(\tau-s)A}\approx I$ and $e^{(\tau-s)A^{\dag}}\approx I$ in \eqref{equ2.4}, we obtain our full-rank exponential Euler (FREE) integrator, for the approximation $\{\rho_n\}_{n=0}^N$ to $\{\rho(t_n)\}_{n=0}^N$, as follows:
\begin{equation}\label{equ2.5}
  \rho_{n+1}=e^{\tau A} \rho_n \, e^{\tau A^{\dag}}+\sum_{k=1}^K\gamma_k
  \int_0^{\tau}L_k \, e^{sA} \, \rho_n \, e^{sA^{\dag}} L_k^{\dag}\, ds ,
\end{equation}
for $0\leq n\leq N-1$.

\begin{rem}
  If we make the approximation $\rho(t_n+s)\approx \rho_n$ within the integral in \eqref{equ2.3}, then we obtain the ``standard" exponential Euler scheme for solving \eqref{equ1.1} given by
  \begin{eqnarray}\label{equ2.6}
\rho_{n+1}&=&e^{\tau A}\rho_n \, e^{\tau A^{\dag}}+\sum_{k=1}^K\gamma_k\int_0^{\tau}e^{(\tau-s)A}L_k \, \rho_n \, L_k^{\dag} \, e^{(\tau-s)A^{\dag}}ds \nonumber\\
&=&e^{\tau A}\rho_n \, e^{\tau A^{\dag}}+\sum_{k=1}^K\gamma_k\int_0^{\tau}e^{sA}L_k \, \rho_n \, L_k^{\dag}e^{sA^{\dag}}ds.
  \end{eqnarray}
Note that the ``standard" exponential Euler scheme \eqref{equ2.6} can preserve the positive semidefinite property and is first-order convergent. However, this scheme can not preserve unit trace of the Lindblad equation. Similar ``standard" exponential integrators have been proposed and analyzed for solving differential matrix Riccati equations in \cite{Chen1,Chen2}. Such matrix-valued exponential integrators are natural extensions of the vector-valued exponential integrators, for which we refer to the review \cite{Hochbruck} for details.
\end{rem}

\begin{rem}
  We remark that the exponential Euler integrator \eqref{equ2.5} can be extended straightforwardly to solve Lindblad equations with time-dependent Hamiltonians $H(t)$, in which case the exponential Euler scheme reads
   \begin{equation}\label{equ2.6b}
  \rho_{n+1}=e^{\tau A_n} \rho_n \, e^{\tau A_n^{\dag}}+\sum_{k=1}^K\gamma_k
  \int_0^{\tau}L_ke^{sA_n}\rho_n \, e^{sA_n^{\dag}} L_k^{\dag}ds,
\end{equation}
where
\begin{equation*}
  A_n=-iH(t_n)-\frac{1}{2}\sum_{k=1}^K\gamma_kL_k^{\dag}L_k.
\end{equation*}
Since the analysis of this scheme is similar to the one with time-independent Hamiltonian, for ease of presentation, we will only present numerical analysis for the FREE scheme \eqref{equ2.5}.
\end{rem}

Now, we briefly illustrate the practical implementation of the proposed exponential Euler scheme \eqref{equ2.5}. Define
\begin{equation*}
  W_n=\int_0^{\tau}e^{sA} \, \rho_n  \, e^{sA^{\dag}}ds.
\end{equation*}
By using the properties of Kronecker products, we can show that $W_n$ is the solution of the following algebraic Lyapunov equation (see \cite{Chen1,Chen2} for details)
\begin{equation*}
  AW_n+W_nA^{\dag}=e^{\tau A}\rho_ne^{\tau A^{\dag}}-\rho_n.
\end{equation*}
It follows that the exponential Euler integrator \eqref{equ2.5} is equivalent to
\begin{subequations}\label{equ2.7}
  \begin{align}
  &AW_n+W_nA^{\dag}=e^{\tau A} \, \rho_n \, e^{\tau A^{\dag}}-\rho_n,\label{equ2.7a}\\
  &\rho_{n+1}=e^{\tau A} \, \rho_n  \, e^{\tau A^{\dag}}+\sum_{k=1}^K\gamma_k \, L_k \, W_n \, L_k^{\dag}.\label{equ2.7b}
\end{align}
\end{subequations}
Note that at every time step of the exponential Euler integrator \eqref{equ2.7}, we need to solve an algebraic Lyapunov equation and compute $2m$ matrix-vector products associated with matrix exponential $e^{\tau A}$.

\section{Positivity and trace preserving properties}
In this section, we study the positivity and trace preserving properties of the exponential Euler integrator \eqref{equ2.5}. Throughout the paper, we use the following notation.
For any matrix $\varrho\in \mathbb{C}^{m\times m}$, we denote $\mbox{Tr}(\varrho)$ the trace of the matrix $\varrho$. For the trace, we have the cyclic property that $\mbox{Tr}(BCD)=\mbox{Tr}(CDB)=\mbox{Tr}(DBC)$. We write $\varrho\geq0$ if the matrix $\varrho$ is Hermitian and positive semidefinite.

For any matrix $\varrho\in \mathbb{C}^{m\times m}$, the trace norm, or the Schatten-1 norm, is defined as $\|\varrho\|_1=\mbox{Tr}(\sqrt{\varrho^{\dag}\varrho})=\sum_{j=1}^m\sigma_j(\varrho)$, where $\sigma_1(\varrho)\geq\sigma_2(\varrho)\geq\cdots\geq\sigma_m(\varrho)$ are the singular values of $\varrho$. If $\varrho\geq0$, we have that $\|\varrho\|_1=\mbox{Tr}(\varrho)$. We denote $\|\varrho\|_F$ the Frobenius norm of $\varrho$.

Now, we study the positivity preserving property of the proposed FREE integrator.
\begin{theorem}\label{thm3.1}
Assume that $\rho_0$ is Hermitian and positive semidefinite. Then, for any time step size $\tau>0$, the solution of the full-rank exponential Euler scheme \eqref{equ2.5} given by $\{\rho_n\}_{n=0}^N$, is Hermitian and positive semidefinite.
\end{theorem}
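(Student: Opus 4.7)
The plan is to proceed by induction on $n$, showing that at each step $\rho_{n+1}$ inherits the Hermitian positive semidefinite property from $\rho_n$. The base case is immediate from the hypothesis that $\rho_0$ is Hermitian and positive semidefinite.

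For the inductive step, I would rely on three elementary facts about positive semidefinite matrices. First, if $\varrho \geq 0$ and $M \in \mathbb{C}^{m\times m}$ is arbitrary, then $M\varrho M^{\dag} \geq 0$; this is because for any $x \in \mathbb{C}^m$ we have $x^{\dag} M \varrho M^{\dag} x = (M^{\dag}x)^{\dag}\varrho(M^{\dag}x) \geq 0$, and Hermiticity is preserved under the map $\varrho \mapsto M\varrho M^{\dag}$. Second, a non-negative linear combination of Hermitian positive semidefinite matrices is Hermitian positive semidefinite. Third, if $s \mapsto M(s)$ is a continuous matrix-valued map on $[0,\tau]$ with $M(s) \geq 0$ for every $s$, then $\int_0^{\tau} M(s)\,ds \geq 0$, since $x^{\dag}\bigl(\int_0^{\tau} M(s)\,ds\bigr)x = \int_0^{\tau} x^{\dag}M(s)x\,ds \geq 0$, and the integral of Hermitian matrices remains Hermitian.

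With these facts in hand, assume $\rho_n \geq 0$. The first term of \eqref{equ2.5} is $(e^{\tau A})\rho_n (e^{\tau A})^{\dag}$, which is Hermitian positive semidefinite by the first fact applied with $M = e^{\tau A}$. For the second term, for each fixed $s \in [0,\tau]$ and each $k$, the integrand can be rewritten as $\bigl(L_k e^{sA}\bigr)\rho_n \bigl(L_k e^{sA}\bigr)^{\dag}$, which is Hermitian positive semidefinite by the same argument applied with $M = L_k e^{sA}$. Integrating over $[0,\tau]$ preserves the property by the third fact, and the non-negativity of $\gamma_k$ together with the second fact gives that $\sum_{k=1}^K \gamma_k \int_0^{\tau} L_k e^{sA}\rho_n e^{sA^{\dag}} L_k^{\dag}\,ds \geq 0$. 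Summing the two contributions, $\rho_{n+1} \geq 0$, closing the induction.

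There is essentially no obstacle here; the mildly technical point is simply to justify that the Bochner-type integral of a continuous family of Hermitian positive semidefinite matrices remains Hermitian positive semidefinite, which follows directly from the scalar inequality $x^{\dag}M(s)x \geq 0$ being preserved under integration over a non-negative measure. Note that this argument does not require any restriction on $\tau$, so positivity holds unconditionally, as claimed.
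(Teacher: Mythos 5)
Your proof is correct and follows essentially the same route as the paper: induction on $n$, observing that both terms on the right-hand side of \eqref{equ2.5} are of congruence form $M\rho_n M^{\dag}$ (with $M=e^{\tau A}$ and $M=L_ke^{sA}$) and that non-negative combinations and integrals of positive semidefinite matrices remain positive semidefinite. You merely spell out the elementary facts that the paper leaves implicit, so there is nothing to add.
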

\begin{proof}
  We prove the theorem by induction. Assume that the result holds for $n=j$, i.e. $\rho_j$ is Hermitian and positive semidefinite. Next, we prove that the result holds for $n=j+1$. Since $\rho_j$ is Hermitian and positive semidefinite, we see that both terms on the right-hand side of \eqref{equ2.5} are Hermitian and positive semidefinite. It follows that $\rho_{j+1}$ is also Hermitian and positive semidefinite, which completes the proof.
\end{proof}

The following result will play a key role in the analysis of the trace preserving property of the exponential Euler scheme.

\begin{lemma}\label{lem3.1}
  For any matrix $\varrho\in \mathbb{C}^{m\times m}$ and any $t\geq0$, it holds that
\end{lemma}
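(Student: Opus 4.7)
Based on the context (the lemma is described as ``key for the analysis of the trace preserving property'' of the scheme \eqref{equ2.5}), I expect the identity in question to read
\[
\mbox{Tr}\!\left(e^{tA}\varrho\,e^{tA^{\dag}} + \sum_{k=1}^{K}\gamma_k\int_0^{t} L_k\, e^{sA}\varrho\,e^{sA^{\dag}} L_k^{\dag}\,ds\right) = \mbox{Tr}(\varrho),
\]
since setting $t=\tau$ in such an identity would immediately yield $\mbox{Tr}(\rho_{n+1})=\mbox{Tr}(\rho_n)$ for the FREE iteration.

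The plan is to introduce the scalar function
\[
f(t)=\mbox{Tr}\!\left(e^{tA}\varrho\,e^{tA^{\dag}}\right)+\sum_{k=1}^{K}\gamma_k\int_0^{t}\mbox{Tr}\!\left(L_k e^{sA}\varrho\,e^{sA^{\dag}}L_k^{\dag}\right)ds,
\]
observe that $f(0)=\mbox{Tr}(\varrho)$, and show $f'(t)\equiv 0$. By differentiating under the trace and the integral sign one gets
\[
f'(t)=\mbox{Tr}\!\left(Ae^{tA}\varrho\,e^{tA^{\dag}}+e^{tA}\varrho\,e^{tA^{\dag}}A^{\dag}\right)+\sum_{k=1}^{K}\gamma_k\,\mbox{Tr}\!\left(L_k e^{tA}\varrho\,e^{tA^{\dag}}L_k^{\dag}\right).
\]
Applying the cyclic property of the trace to each term (moving $A$, $A^{\dag}$, and $L_k$, $L_k^{\dag}$ to the right), one can factor out $M(t):=e^{tA}\varrho\,e^{tA^{\dag}}$ and obtain
\[
f'(t)=\mbox{Tr}\!\left(\Big(A+A^{\dag}+\sum_{k=1}^{K}\gamma_k L_k^{\dag}L_k\Big)\,M(t)\right).
\]

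The concluding step is purely algebraic: from the definition \eqref{equ2.1} we have $A+A^{\dag}=-iH+iH^{\dag}-\sum_{k=1}^{K}\gamma_k L_k^{\dag}L_k=-\sum_{k=1}^{K}\gamma_k L_k^{\dag}L_k$, using $H=H^{\dag}$. Hence the bracketed matrix is the zero matrix, $f'(t)=0$, and integrating from $0$ to $t$ gives the claim. I do not foresee any real obstacle: the only points requiring care are the legitimacy of differentiating under the integral (trivial, since the integrand is a smooth matrix-valued function on compact intervals) and the careful bookkeeping of the cyclic permutations under $\mbox{Tr}$. The essential mechanism is simply that the FREE update \eqref{equ2.5} reproduces at the integrated level the exact cancellation between the Hamiltonian part and the dissipator that makes the continuous Lindblad flow trace preserving.
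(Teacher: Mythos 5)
Your proposal is correct: you reconstructed the intended identity \eqref{equ3.1} accurately, and your argument (differentiate $f(t)$, use cyclicity of the trace, and invoke the algebraic cancellation $A+A^{\dag}+\sum_{k}\gamma_k L_k^{\dag}L_k=0$, which holds because $H=H^{\dag}$) is sound; note that the derivative of your integral term is just the fundamental theorem of calculus, since the integrand does not depend on $t$. The paper takes a different, slightly longer route: it regards $\sigma(t)=e^{tA}\varrho\,e^{tA^{\dag}}$ as the solution of $\dot\sigma=A\sigma+\sigma A^{\dag}$, rewrites this equation via the definition \eqref{equ2.1} as a Hamiltonian part plus the anticommutator $-\tfrac12\sum_k\gamma_k\{L_k^{\dag}L_k,\sigma\}$, applies the variation-of-constants formula with respect to the unitary conjugation $e^{-itH}(\cdot)e^{itH}$, and only then takes traces, using unitary invariance and cyclicity to obtain
\begin{equation*}
\mbox{Tr}\left(e^{tA}\varrho\,e^{tA^{\dag}}\right)=\mbox{Tr}(\varrho)-\sum_{k=1}^K\gamma_k\int_0^t\mbox{Tr}\left(L_k e^{sA}\varrho\,e^{sA^{\dag}}L_k^{\dag}\right)ds,
\end{equation*}
which is a rearrangement of \eqref{equ3.1}. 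The essential mechanism is identical in both arguments (the $\pm iH$ contributions vanish under the trace and the $L_k^{\dag}L_k$ part of $A$ exactly balances the jump terms), so the difference is one of packaging: your differential formulation is shorter and avoids a second use of variation of constants, whereas the paper's integral formulation directly exhibits the rearranged identity above, which is the precise form reused later to prove the trace-norm contractivity of $\varrho\mapsto e^{tA}\varrho\,e^{tA^{\dag}}$ (there the nonnegativity of the subtracted integral for $\varrho\geq0$ is what yields the bound). Either derivation serves the subsequent analysis equally well.
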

\begin{equation}\label{equ3.1}
    \mbox{Tr}\left(e^{t A} \, \varrho \, e^{t A^{\dag}}+\sum_{k=1}^K\gamma_k
  \int_0^{t}L_k \, e^{sA} \, \varrho \, e^{sA^{\dag}} L_k^{\dag} \, ds\right)=\mbox{Tr}(\varrho).
  \end{equation}
\begin{proof}
  Note that the solution of the differential equation
  \begin{equation}\label{equ3.2}
    \dot{\sigma}=A \, \sigma+\sigma \, A^{\dag},~~~\sigma(0)=\varrho ,
  \end{equation}
  is given by
  \begin{equation}\label{equ3.3}
    \sigma(t)=e^{tA} \, \varrho \, e^{tA^{\dag}}.
  \end{equation}
  Using the definition of $A$ in \eqref{equ2.1}, we obtain that the differential equation \eqref{equ3.2} is equivalent to
  \begin{equation*}
    \dot{\sigma}=-i H\sigma+i\sigma H-\frac{1}{2}\sum_{k=1}^K\gamma_k\left(L_k^{\dag}L_k \, \sigma+\sigma \, L_k^{\dag}L_k\right),~~~\sigma(0)=\varrho.
  \end{equation*}
  By the variation-of-constants formula, we have
  \begin{equation*}
    \sigma(t)=e^{-itH}\varrho \, e^{itH}-\frac{1}{2}\sum_{k=1}^K\gamma_k\int_0^te^{-i(t-s)H}\left(L_k^{\dag}L_k \, \sigma(s)+\sigma(s) \, L_k^{\dag}L_k\right) e^{i(t-s)H} \, ds.
  \end{equation*}
  Using the cyclic property of the trace and \eqref{equ3.3},
   we obtain
  \begin{eqnarray}\label{equ3.4}
    \mbox{Tr}(e^{tA}\varrho e^{tA^{\dag}})&=&\mbox{Tr}(\sigma(t))=\mbox{Tr}(e^{-itH}\varrho e^{itH})\nonumber\\
    &&-\frac{1}{2}\sum_{k=1}^K\gamma_k\int_0^t\mbox{Tr}\left(e^{-i(t-s)H}\left(L_k^{\dag}L_k\sigma(s)+\sigma(s) L_k^{\dag}L_k\right) e^{i(t-s)H}\right)ds\nonumber\\
    &=&\mbox{Tr}(\varrho)-\frac{1}{2}\sum_{k=1}^K\gamma_k\int_0^t\mbox{Tr}\left(L_k^{\dag}L_k\sigma(s)+\sigma(s) L_k^{\dag}L_k \right)ds\nonumber\\
    &=&\mbox{Tr}(\varrho)-\sum_{k=1}^K\gamma_k\int_0^t\mbox{Tr}\left(L_k^{\dag}L_k\sigma(s)\right)ds\nonumber\\
    &=&\mbox{Tr}(\varrho)-\sum_{k=1}^K\gamma_k\int_0^t\mbox{Tr}\left(L_k\sigma(s)L_k^{\dag}\right)ds\nonumber\\
    &=&\mbox{Tr}(\varrho)-\sum_{k=1}^K\gamma_k\int_0^t\mbox{Tr}\left(L_k \, e^{sA}\varrho \, e^{sA^{\dag}} L_k^{\dag}\right)ds.
  \end{eqnarray}
  Then, the desired result \eqref{equ3.1} follows.
\end{proof}

Now, we are in the position to give the following theorem.

\begin{theorem}\label{thm3.2}
  Assume that the initial data satisfies $\mbox{Tr}(\rho_0)=1$. Then, for any time step size $\tau>0$, the full-rank exponential Euler integrator \eqref{equ2.5} preserves unit trace
  in the sense that
  \[\mbox{Tr}(\rho_n)=1,~~~\, n\geq0.\]
\end{theorem}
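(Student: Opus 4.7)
The plan is to prove Theorem 3.2 by straightforward induction on $n$, with Lemma 3.1 doing essentially all of the work. The base case is immediate from the hypothesis $\mbox{Tr}(\rho_0) = 1$.

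For the inductive step, I would assume $\mbox{Tr}(\rho_n) = 1$ and compute $\mbox{Tr}(\rho_{n+1})$ directly from the defining formula \eqref{equ2.5}. Since
\[
  \rho_{n+1} = e^{\tau A} \rho_n \, e^{\tau A^{\dag}} + \sum_{k=1}^K \gamma_k \int_0^{\tau} L_k \, e^{sA} \rho_n \, e^{sA^{\dag}} L_k^{\dag}\, ds,
\]
applying Lemma 3.1 with $\varrho = \rho_n$ and $t = \tau$ yields exactly $\mbox{Tr}(\rho_{n+1}) = \mbox{Tr}(\rho_n)$. Combined with the inductive hypothesis, this gives $\mbox{Tr}(\rho_{n+1}) = 1$, closing the induction.

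There is really no obstacle here: Lemma 3.1 was formulated precisely so that its conclusion matches the right-hand side of the FREE update. The only thing worth emphasizing in the write-up is that the lemma applies for \emph{every} Hermitian or even arbitrary complex matrix $\varrho$, so in particular for $\rho_n$ regardless of whether positivity has been invoked — trace preservation is a purely algebraic consequence of the identity \eqref{equ3.1} and does not rely on Theorem 3.1. One could alternatively phrase the whole argument in one line by noting that the map $\varrho \mapsto e^{\tau A} \varrho e^{\tau A^{\dag}} + \sum_k \gamma_k \int_0^{\tau} L_k e^{sA} \varrho e^{sA^{\dag}} L_k^{\dag} ds$ is trace-preserving (by Lemma 3.1), so iterating it $n$ times from $\rho_0$ preserves the initial trace value $1$.
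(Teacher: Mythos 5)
Your proposal is correct and follows exactly the paper's argument: induction on $n$, with Lemma \ref{lem3.1} applied to $\varrho=\rho_n$ and $t=\tau$ in the FREE update \eqref{equ2.5} giving $\mbox{Tr}(\rho_{n+1})=\mbox{Tr}(\rho_n)$. The remark that trace preservation is purely algebraic and independent of Theorem \ref{thm3.1} is a fine (if optional) clarification, but the substance matches the paper's proof.
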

\begin{proof}
  We prove the theorem by induction. We have $\mbox{Tr}(\rho_0)=1$. Assume that the result holds for $n=j$, i.e., $\mbox{Tr}(\rho_j)=1$. Next we prove the result holds for $n=j+1$. Using \eqref{equ2.5} and applying Lemma \ref{lem3.1}, we have $\mbox{Tr}(\rho_{j+1})=\mbox{Tr}(\rho_j)$, which completes the proof.
\end{proof}

\begin{rem}
Note from Theorems \ref{thm3.1} and \ref{thm3.2} that $\|\rho_n\|_1=\mbox{Tr}(\rho_n)=1$ for any $n\geq0$ and any $\tau>0$, which indicates that the proposed exponential Euler integrator \eqref{equ2.5} is unconditionally stable.
\end{rem}

\section{Accuracy of approximation}
In this section, we investigate the convergence of the exponential Euler scheme \eqref{equ2.5}. First, we present some preparatory results that will be useful in the error analysis.

\begin{lemma}\label{lem4.2}
  For any matrix $B\in \mathbb{C}^{m\times m}$ and any Hermitian matrix $\varrho\in \mathbb{C}^{m\times m}$, it holds that
  \[\left\|B\varrho B^{\dag}\right\|_1\leq \left\|B|\varrho|B^{\dag}\right\|_1,\]
  where $|\varrho|=\sqrt{\varrho^{\dag}\varrho}$.
\end{lemma}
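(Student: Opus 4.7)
The plan is to reduce the statement to the triangle inequality for $\|\cdot\|_1$ after splitting $\varrho$ into its positive and negative parts.

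First, since $\varrho$ is Hermitian it has a spectral decomposition $\varrho = U \Lambda U^{\dag}$ with $\Lambda$ real diagonal. I would write $\Lambda = \Lambda_+ - \Lambda_-$, where $\Lambda_+$ and $\Lambda_-$ are the diagonal matrices whose entries are the positive and the (absolute values of the) negative eigenvalues of $\varrho$, respectively. Setting $\varrho_\pm = U \Lambda_\pm U^{\dag}$, one has the decomposition $\varrho = \varrho_+ - \varrho_-$ with $\varrho_+, \varrho_- \geq 0$, together with the identity $|\varrho| = \varrho_+ + \varrho_-$ (which follows directly from $|\varrho|^2 = \varrho^{\dag}\varrho = U\Lambda^2 U^{\dag}$).

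Next, I would conjugate by $B$ to obtain
\begin{equation*}
B \varrho B^{\dag} = B\varrho_+ B^{\dag} - B\varrho_- B^{\dag},
\end{equation*}
where each matrix $B\varrho_\pm B^{\dag}$ is positive semidefinite. Applying the triangle inequality for the Schatten-1 norm yields
\begin{equation*}
\|B\varrho B^{\dag}\|_1 \leq \|B\varrho_+ B^{\dag}\|_1 + \|B\varrho_- B^{\dag}\|_1.
\end{equation*}

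Finally, I would invoke the fact that, for any positive semidefinite matrix $X$, the trace norm coincides with the trace: $\|X\|_1 = \mbox{Tr}(X)$. This gives
\begin{equation*}
\|B\varrho_+ B^{\dag}\|_1 + \|B\varrho_- B^{\dag}\|_1 = \mbox{Tr}(B\varrho_+ B^{\dag}) + \mbox{Tr}(B\varrho_- B^{\dag}) = \mbox{Tr}\bigl(B(\varrho_+ + \varrho_-)B^{\dag}\bigr) = \mbox{Tr}(B |\varrho| B^{\dag}).
\end{equation*}
Since $B|\varrho|B^{\dag}$ is positive semidefinite as well, the last quantity equals $\|B|\varrho|B^{\dag}\|_1$, establishing the claim.

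There is no real obstacle here: the only subtle step is recognizing that the natural Jordan-type decomposition $\varrho = \varrho_+ - \varrho_-$ for a Hermitian matrix is precisely what makes the triangle inequality tight up to recombining into $|\varrho|$. If one wanted to avoid the spectral decomposition, an alternative would be to use the variational characterization $\|X\|_1 = \sup_{\|Y\|\leq 1}|\mbox{Tr}(XY)|$ with $Y$ unitary, but the spectral approach is more direct and self-contained.
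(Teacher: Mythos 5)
Your proof is correct: the Jordan decomposition $\varrho=\varrho_+-\varrho_-$ with $\varrho_\pm\geq0$ and $|\varrho|=\varrho_++\varrho_-$ is valid, conjugation by $B$ preserves positive semidefiniteness, and the chain $\|B\varrho B^{\dag}\|_1\leq\|B\varrho_+B^{\dag}\|_1+\|B\varrho_-B^{\dag}\|_1=\mbox{Tr}\bigl(B(\varrho_++\varrho_-)B^{\dag}\bigr)=\|B|\varrho|B^{\dag}\|_1$ goes through. The route is genuinely different from the paper's, though: the paper also starts from the spectral decomposition, but instead of invoking the matrix-level triangle inequality for the Schatten-1 norm, it reduces to diagonal $D$, takes the spectral decomposition $PDP^{\dag}=U\Lambda U^{\dag}$, writes $\|PDP^{\dag}\|_1=\sum_j|u_j^{\dag}PDP^{\dag}u_j|$, expands $PDP^{\dag}=\sum_k d_k p_kp_k^{\dag}$, and applies only the \emph{scalar} triangle inequality to the quadratic forms $\sum_k d_k|p_k^{\dag}u_j|^2$ before recombining into $\mbox{Tr}(P|D|P^{\dag})$. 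Your version is shorter and more conceptual, but it delegates the essential work to the standard fact that $\|\cdot\|_1$ is a norm on matrices (whose triangle inequality for non-commuting summands itself requires, e.g., the variational or polar-decomposition characterization you allude to); the paper's argument is more self-contained, using nothing beyond the identity $\|X\|_1=\sum_j|\lambda_j(X)|$ for Hermitian $X$ and elementary estimates, effectively proving by hand exactly the instance of the triangle inequality it needs.
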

\begin{proof}
  Note that for any Hermitian matrix $\varrho$, we have the spectral decomposition $\varrho=QD Q^{\dag}$, where $Q$ is unitary and $D$ is diagonal. Note that $|\varrho|=Q|D|Q^{\dag}$. So we only need to prove that the following inequality
  \[\left\|PDP^{\dag}\right\|_1\leq \left\|P|D|P^{\dag}\right\|_1\]
  holds for any $P=[p_1,\ldots,p_m]\in \mathbb{C}^{m\times m}$ and any diagonal matrix $D=\mbox{diag}(d_1,\ldots,d_m)\in \mathbb{R}^{m\times m}$.

  Since $PDP^{\dag}$ is Hermitian, we have the spectral decomposition $PDP^{\dag}=U\Lambda U^{\dag}$, where $U=[u_1,\ldots,u_m]$ is unitary and $\Lambda=\mbox{diag}(\lambda_1,\ldots,\lambda_m)$ is the diagonal matrix containing eigenvalues of $PDP^{\dag}$. Using $\lambda_j=u_j^{\dag}PDP^{\dag}u_j$, $PDP^{\dag}=\sum\limits_{k=1}^md_kp_kp_k^{\dag}$, and $P|D|P^{\dag}=\sum\limits_{k=1}^m|d_k|p_kp_k^{\dag}$, we get
  \begin{eqnarray*}
    \left\|PDP^{\dag}\right\|_1&=&\sum_{j=1}^m\sigma_j(PDP^{\dag})=\sum_{j=1}^m|\lambda_j|=\sum_{j=1}^m\left|u_j^{\dag}PDP^{\dag}u_j\right|\\
    &=&\sum_{j=1}^m\left|u_j^{\dag}\left(\sum_{k=1}^md_kp_kp_k^{\dag}\right)u_j\right|\\
    &=&\sum_{j=1}^m\left|\sum_{k=1}^md_k (p_k^{\dag}u_j)^{\dag}(p_k^{\dag}u_j)\right|\\
    &\leq&\sum_{j=1}^m\sum_{k=1}^m|d_k| (p_k^{\dag}u_j)^{\dag}(p_k^{\dag}u_j)\\
    &=&\sum_{j=1}^mu_j^{\dag}\left(\sum_{k=1}^m|d_k|p_kp_k^{\dag}\right)u_j\\
    &=&\sum_{j=1}^mu_j^{\dag}P|D|P^{\dag}u_j=\mbox{Tr}(U^{\dag}P|D|P^{\dag}U)\\
    &=&\mbox{Tr}(P|D|P^{\dag})=\left\|P|D|P^{\dag}\right\|_1,
  \end{eqnarray*}
  and the required result follows.
\end{proof}

\begin{lemma}\label{lem4.1}
   For any Hermitian matrix $\varrho\in \mathbb{C}^{m\times m}$, it holds that
  \begin{equation*}
    \left\|e^{tA} \, \varrho \, e^{tA^{\dag}}\right\|_1\leq \left\|\varrho\right\|_1,~~~\, t\geq0.
  \end{equation*}
\end{lemma}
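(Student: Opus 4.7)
The plan is to reduce the statement to the case of a positive semidefinite matrix, where the trace norm coincides with the trace, and then extract the bound directly from the trace identity of Lemma \ref{lem3.1}. The two ingredients I need are already in hand: Lemma \ref{lem4.2} lets me replace $\varrho$ by $|\varrho|$ at the cost of an inequality, and Lemma \ref{lem3.1} gives an exact expression for the trace of $e^{tA}\,|\varrho|\,e^{tA^{\dag}}$.

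First I would apply Lemma \ref{lem4.2} with $B=e^{tA}$ to obtain
\[
\left\|e^{tA}\,\varrho\,e^{tA^{\dag}}\right\|_1 \;\leq\; \left\|e^{tA}\,|\varrho|\,e^{tA^{\dag}}\right\|_1 .
\]
Since $|\varrho|=\sqrt{\varrho^{\dag}\varrho}$ is Hermitian and positive semidefinite, writing $|\varrho|=XX^{\dag}$ shows that the congruence $e^{tA}|\varrho|e^{tA^{\dag}}=(e^{tA}X)(e^{tA}X)^{\dag}$ is again positive semidefinite, so its trace norm equals its trace:
\[
\left\|e^{tA}\,|\varrho|\,e^{tA^{\dag}}\right\|_1 = \mathrm{Tr}\!\left(e^{tA}\,|\varrho|\,e^{tA^{\dag}}\right).
\]

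Next I would invoke Lemma \ref{lem3.1} with the choice $\varrho \leftarrow |\varrho|$, which yields
\[
\mathrm{Tr}\!\left(e^{tA}\,|\varrho|\,e^{tA^{\dag}}\right) = \mathrm{Tr}(|\varrho|) - \sum_{k=1}^{K}\gamma_k\int_0^{t}\mathrm{Tr}\!\left(L_k\,e^{sA}\,|\varrho|\,e^{sA^{\dag}}\,L_k^{\dag}\right) ds .
\]
The same congruence argument as above shows that $L_k\,e^{sA}\,|\varrho|\,e^{sA^{\dag}}\,L_k^{\dag}$ is positive semidefinite for every $s$ and $k$, so every term in the sum is nonnegative. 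Dropping those nonnegative terms and using $\mathrm{Tr}(|\varrho|)=\||\varrho|\|_1=\|\varrho\|_1$ (by the definition of the Schatten-$1$ norm), I conclude
\[
\mathrm{Tr}\!\left(e^{tA}\,|\varrho|\,e^{tA^{\dag}}\right) \;\leq\; \|\varrho\|_1 ,
\]
and chaining this with the earlier two inequalities gives the claim.

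I do not anticipate a serious obstacle: once one observes that Lemma \ref{lem4.2} lets the proof be reduced to a positive semidefinite matrix, both the identification $\|\cdot\|_1=\mathrm{Tr}(\cdot)$ for positive semidefinite matrices and the sign of the Lindblad dissipation terms in Lemma \ref{lem3.1} drop out automatically. The only subtlety to verify carefully is that $|\varrho|$ is indeed Hermitian positive semidefinite so that Lemma \ref{lem3.1} applies and the dissipation integrals are nonnegative, but this is immediate from the definition $|\varrho|=\sqrt{\varrho^{\dag}\varrho}$.
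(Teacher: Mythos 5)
Your proof is correct and follows essentially the same route as the paper: reduce to $|\varrho|$ via Lemma \ref{lem4.2}, identify the trace norm with the trace for the positive semidefinite matrix $e^{tA}|\varrho|e^{tA^{\dag}}$, and then use the trace identity behind Lemma \ref{lem3.1} (which rearranges, by linearity of the trace, to exactly the formula you quote) to drop the nonnegative dissipation terms. No gaps; this matches the paper's argument step for step.
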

\begin{proof}
 Let $|\varrho|=\sqrt{\varrho^{\dag}\varrho}$ and we note that $|\varrho|\geq0$ and $\|\varrho\|_1=\mbox{Tr}(|\varrho|)$.
 For any $t\geq0$, applying Lemma \ref{lem4.2} and Lemma \ref{lem3.1} yields
  \begin{eqnarray*}
    \left\|e^{tA}\varrho e^{tA^{\dag}}\right\|_1&\leq& \left\|e^{tA}|\varrho| e^{tA^{\dag}}\right\|_1 \\
    &=&\mbox{Tr}\left(e^{tA}|\varrho| e^{tA^{\dag}}\right)\\
    &=&\mbox{Tr}(|\varrho|)-\sum_{k=1}^K\gamma_k\int_0^t\mbox{Tr}\left(L_ke^{sA}|\varrho| e^{sA^{\dag}} L_k^{\dag}\right)ds\\
    &=&\|\varrho\|_1-\sum_{k=1}^K\gamma_k\int_0^t\left\|L_ke^{sA}|\varrho| e^{sA^{\dag}} L_k^{\dag}\right\|_1ds\\
    &\leq&\|\varrho\|_1.
  \end{eqnarray*}
  The conclusion follows.
\end{proof}

\begin{lemma}\label{lem4.3}
  For any Hermitian matrix $\varrho\in \mathbb{C}^{m\times m}$ and any $t\geq0$, it holds that
\end{lemma}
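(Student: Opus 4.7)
The most natural reading of the missing right-hand side, given the arc of this section (a positivity-preserving, trace-preserving propagator) and what a discrete stability argument in Section~4 will need, is the trace-norm contractivity of the one-step FREE propagator
\[
\mathcal{S}_t(\varrho) := e^{tA}\,\varrho\,e^{tA^{\dag}} + \sum_{k=1}^K \gamma_k \int_0^t L_k\,e^{sA}\,\varrho\,e^{sA^{\dag}}\,L_k^{\dag}\,ds,
\]
namely $\|\mathcal{S}_t(\varrho)\|_1 \le \|\varrho\|_1$ for every Hermitian $\varrho$ and every $t\geq 0$. My plan is to prove this in two stages, leveraging Lemma~\ref{lem3.1} (trace invariance of $\mathcal{S}_t$) together with the sign decomposition of a Hermitian matrix.

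\emph{Step 1: positive semidefinite case.} If $\varrho\ge 0$, every conjugation $X\mapsto B X B^{\dag}$ preserves positive semidefiniteness, so $e^{tA}\varrho e^{tA^{\dag}}\ge 0$ and each integrand $L_k\,e^{sA}\varrho e^{sA^{\dag}} L_k^{\dag}\ge 0$. A Bochner integral of PSD matrices is PSD, and sums of PSD matrices are PSD, so $\mathcal{S}_t(\varrho)\ge 0$. For PSD matrices the trace norm coincides with the trace, so Lemma~\ref{lem3.1} gives
\[
\|\mathcal{S}_t(\varrho)\|_1 = \mbox{Tr}(\mathcal{S}_t(\varrho)) = \mbox{Tr}(\varrho) = \|\varrho\|_1,
\]
which is in fact an equality in the PSD case.

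\emph{Step 2: general Hermitian case.} I would use the spectral decomposition $\varrho = Q D Q^{\dag}$ with real diagonal $D$, and split $D = D_+ - D_-$ into its positive and negative parts. Setting $\varrho_\pm := Q D_\pm Q^{\dag}\ge 0$ gives $\varrho = \varrho_+ - \varrho_-$ together with the identity $\mbox{Tr}(\varrho_+)+\mbox{Tr}(\varrho_-)=\mbox{Tr}(|\varrho|)=\|\varrho\|_1$. Since $\mathcal{S}_t$ is linear, the triangle inequality in the trace norm combined with Step~1 applied to $\varrho_+$ and $\varrho_-$ yields
\[
\|\mathcal{S}_t(\varrho)\|_1 \le \|\mathcal{S}_t(\varrho_+)\|_1 + \|\mathcal{S}_t(\varrho_-)\|_1 = \mbox{Tr}(\varrho_+)+\mbox{Tr}(\varrho_-) = \|\varrho\|_1,
\]
which is the claim.

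I do not anticipate a substantive obstacle: once the sign decomposition is in place, everything is driven by positivity preservation of $\mathcal{S}_t$ (morally the content of Theorem~\ref{thm3.1}) and by Lemma~\ref{lem3.1}. The only subtle point to check is that the operator $\mathcal{S}_t$ is $\mathbb{C}$-linear, so that $\mathcal{S}_t(\varrho_+-\varrho_-)=\mathcal{S}_t(\varrho_+)-\mathcal{S}_t(\varrho_-)$; this is immediate from the defining formula. If instead the intended conclusion were the slightly weaker bound $\|\mathcal{S}_t(\varrho)\|_1\le \|e^{tA}|\varrho|e^{tA^{\dag}}\|_1+\sum_k\gamma_k\int_0^t\|L_k e^{sA}|\varrho|e^{sA^{\dag}}L_k^{\dag}\|_1\,ds$, the same two-step strategy would apply, now invoking Lemma~\ref{lem4.2} on each piece before collapsing via Lemma~\ref{lem3.1}.
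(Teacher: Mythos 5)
Your proof is correct, and you identified the intended right-hand side of the truncated statement correctly (it is exactly \eqref{equ4.1}), but your route differs from the paper's. The paper bounds the whole expression by the triangle inequality, then invokes Lemma \ref{lem4.2} on each term to replace $\varrho$ by $|\varrho|$, turns each trace norm of a positive semidefinite matrix into a trace, and collapses the resulting sum via Lemma \ref{lem3.1} applied to $|\varrho|$. You instead use the Jordan decomposition $\varrho=\varrho_+-\varrho_-$, observe that the one-step map $\mathcal{S}_t$ is linear and positivity preserving (the same conjugation argument as in Theorem \ref{thm3.1}), apply Lemma \ref{lem3.1} to each positive part to get $\|\mathcal{S}_t(\varrho_\pm)\|_1=\mbox{Tr}(\varrho_\pm)$, and finish with the triangle inequality and $\mbox{Tr}(\varrho_+)+\mbox{Tr}(\varrho_-)=\|\varrho\|_1$. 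Your argument bypasses Lemma \ref{lem4.2} entirely, which makes it more self-contained and closer to the standard ``positive trace-preserving maps are trace-norm contractions on Hermitian matrices'' reasoning from quantum information; it also yields equality on positive semidefinite inputs as an immediate byproduct, and the same decomposition would give an alternative proof of Lemma \ref{lem4.1} without Lemma \ref{lem4.2}. What the paper's route buys is economy within its own structure: Lemma \ref{lem4.2} is already needed for Lemma \ref{lem4.1}, so reusing it here lets the paper handle the full sum in one chain of inequalities anchored on $|\varrho|$. Both proofs rest, in the end, on the trace identity of Lemma \ref{lem3.1}.
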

\begin{equation}\label{equ4.1}
    \left\|e^{t A}\varrho \, e^{t A^{\dag}}+\sum_{k=1}^K\gamma_k
  \int_0^{t}L_k \, e^{sA}\varrho \, e^{sA^{\dag}} L_k^{\dag} \, ds\right\|_1\leq\|\varrho\|_1.
  \end{equation}
\begin{proof}
  Using Lemma \ref{lem4.2} and noting that $|\varrho|\geq0$, we have
  \begin{eqnarray*}
  &&\left\|e^{t A}\varrho e^{t A^{\dag}}+\sum_{k=1}^K\gamma_k
  \int_0^{t}L_ke^{sA}\varrho e^{sA^{\dag}} L_k^{\dag}ds\right\|_1\\
  &\leq& \left\|e^{t A}\varrho e^{t A^{\dag}}\right\|_1+\sum_{k=1}^K\gamma_k
  \int_0^{t}\left\|L_ke^{sA}\varrho e^{sA^{\dag}} L_k^{\dag}\right\|_1ds\\
  &\leq& \left\|e^{t A}|\varrho| e^{t A^{\dag}}\right\|_1+\sum_{k=1}^K\gamma_k
  \int_0^{t}\left\|L_ke^{sA}|\varrho| e^{sA^{\dag}} L_k^{\dag}\right\|_1ds\\
  &=&\mbox{Tr}\left(e^{t A}|\varrho| e^{t A^{\dag}}\right)+\sum_{k=1}^K\gamma_k
  \int_0^{t}\mbox{Tr}\left(L_ke^{sA}|\varrho| e^{sA^{\dag}} L_k^{\dag}\right)ds\\
  &=&\mbox{Tr}\left(e^{t A}|\varrho| e^{t A^{\dag}}+\sum_{k=1}^K\gamma_k
  \int_0^{t}L_ke^{sA}|\varrho| e^{sA^{\dag}} L_k^{\dag}ds\right)\\
  &=&\mbox{Tr}(|\varrho|),
  \end{eqnarray*}
  where the last equality follows from Lemma \ref{lem3.1}.
  The desired result follows since $\mbox{Tr}(|\varrho|)=\|\varrho\|_1$.
\end{proof}

For the discussion that follows, we define
\[C_1:=\sum_{k=1}^K\gamma_k \, \|L_k\|_1^2,~~~C_2:=\|A\|_1.\]
Now, we are ready to prove our error estimate of the full-rank exponential Euler scheme \eqref{equ2.5}. We have
\begin{theorem}\label{thm4.1}
  Let $\rho(t)$ be the solution of the Lindblad equation \eqref{equ1.1} with
  $\rho(0)=\rho_0$, and $\{\rho_n\}_{n=0}^N$ be the corresponding numerical solution generated by the full-rank exponential Euler scheme \eqref{equ2.5}. Then, it holds that
  \[\|\rho(t_n)-\rho_n\|_1\leq c_1t_n\tau,~~~0\leq n\leq N,\]
  where the constant $c_1>0$ depends on $C_1,~C_2,~T$, but is independent of $\tau$ and $n$.
\end{theorem}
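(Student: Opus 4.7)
The plan is to carry out a standard consistency-plus-stability argument, where consistency comes from Taylor-expanding the integral formula \eqref{equ2.3} and stability is supplied precisely by Lemma \ref{lem4.3}.

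First I would set $e_n := \rho(t_n) - \rho_n$, noting that $e_n$ is Hermitian because the exact flow of the Lindblad equation preserves Hermiticity and the scheme does so by Theorem \ref{thm3.1}. Subtracting the FREE update \eqref{equ2.5} from the twice-expanded exact formula \eqref{equ2.4} gives a recursion of the form
\begin{equation*}
e_{n+1} = e^{\tau A} e_n e^{\tau A^{\dag}} + \sum_{k=1}^K \gamma_k \int_0^{\tau} L_k e^{sA} e_n e^{sA^{\dag}} L_k^{\dag}\, ds + T_n + R_{n,1},
\end{equation*}
where $R_{n,1}$ is the remainder defined in \eqref{equ2.4b} and
\begin{equation*}
T_n = \sum_{k=1}^K \gamma_k \int_0^{\tau} \Bigl( e^{(\tau-s)A} L_k e^{sA} \rho(t_n) e^{sA^{\dag}} L_k^{\dag} e^{(\tau-s)A^{\dag}} - L_k e^{sA} \rho(t_n) e^{sA^{\dag}} L_k^{\dag} \Bigr) ds
\end{equation*}
collects the error from replacing $e^{(\tau-s)A}$ by $I$. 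Since $e_n$ is Hermitian, Lemma \ref{lem4.3} applies verbatim to the first two terms and yields $\|\,e^{\tau A} e_n e^{\tau A^{\dag}} + \sum_k \gamma_k \int_0^\tau L_k e^{sA} e_n e^{sA^{\dag}} L_k^{\dag}\, ds\,\|_1 \leq \|e_n\|_1$. This is the stability input.

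Next I would bound the two consistency terms by $O(\tau^2)$ in trace norm. For $T_n$, I would write
\begin{equation*}
e^{(\tau-s)A} Y e^{(\tau-s)A^{\dag}} - Y = \int_0^{\tau-s} \bigl( A e^{rA} Y e^{rA^{\dag}} + e^{rA} Y e^{rA^{\dag}} A^{\dag} \bigr) dr
\end{equation*}
with $Y := L_k e^{sA} \rho(t_n) e^{sA^{\dag}} L_k^{\dag} \geq 0$, apply Lemma \ref{lem4.1} to the Hermitian $Y$ to get $\|e^{rA} Y e^{rA^{\dag}}\|_1 \leq \|Y\|_1$, and use the submultiplicativity $\|AZ\|_1 \leq \|A\|_1 \|Z\|_1$ together with $\|Y\|_1 = \mathrm{Tr}(Y) = \mathrm{Tr}(L_k^{\dag} L_k e^{sA}\rho(t_n)e^{sA^{\dag}}) \leq \|L_k\|_1^2 \|\rho(t_n)\|_1 = \|L_k\|_1^2$ (trace preservation of the exact flow). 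Integrating in $r$ and $s$ then gives $\|T_n\|_1 \leq C_1 C_2 \tau^2$. For $R_{n,1}$, the nested integrand is positive semidefinite, so its trace norm equals its trace; iterating Lemma \ref{lem4.1} and the same $\mathrm{Tr}(L^{\dag}L\,\cdot) \leq \|L\|_1^2 \|\cdot\|_1$ estimate twice, together with $\|\rho(t_n+v)\|_1 = 1$, yields $\|R_{n,1}\|_1 \leq \tfrac{1}{2} C_1^2 \tau^2$.

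Finally, with $e_0 = 0$, the recursion $\|e_{n+1}\|_1 \leq \|e_n\|_1 + (C_1 C_2 + \tfrac{1}{2}C_1^2)\tau^2$ telescopes to $\|e_n\|_1 \leq n(C_1 C_2 + \tfrac{1}{2}C_1^2)\tau^2 = c_1 t_n \tau$, which is the claimed bound. The main obstacle I anticipate is the careful bookkeeping of norms in the estimate for $R_{n,1}$: one must repeatedly exploit that every matrix sandwiched by matrix exponentials is Hermitian positive semidefinite (so that Lemma \ref{lem4.1} applies and trace norms collapse to traces) and that $\|L_k^{\dag} L_k\|_{\mathrm{op}} \leq \|L_k\|_1^2$, in order to avoid picking up an extra factor growing with $\tau$ from the exponentials. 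Once this is set up cleanly, the Taylor/Gronwall step is routine.
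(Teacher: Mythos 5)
Your proposal is correct and follows essentially the same route as the paper: the same error recursion obtained by subtracting the FREE scheme \eqref{equ2.5} from the twice-expanded variation-of-constants formula \eqref{equ2.4}, stability supplied by Lemma \ref{lem4.3} applied to the Hermitian error, the identical bound $\|R_{n,1}\|_1\leq \tfrac12 C_1^2\tau^2$, and telescoping from $E_0=0$. The only cosmetic difference is that you bound the error from replacing $e^{(\tau-s)A}$ by $I$ as a single term $T_n$ via the derivative of $r\mapsto e^{rA}Ye^{rA^{\dag}}$, whereas the paper Taylor-expands $e^{(\tau-s)A}=I+\int_s^{\tau}Ae^{(\tau-v)A}dv$ on each side to produce $R_{n,2},R_{n,3},R_{n,4}$; both yield the same $O(\tau^2)$ local error, yours with the marginally cleaner constant $C_1C_2\tau^2$ in place of $C_1C_2\tau^2+\tfrac13 C_1C_2^2\tau^3$.
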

\begin{proof}
Expanding $e^{(\tau-s)A}$ in a Taylor series with remainder in integral form yields
\begin{equation*}
  e^{(\tau-s)A}=I+\int_{s}^{\tau}A \, e^{(\tau-v)A}dv.
\end{equation*}
Inserting the above equation into \eqref{equ2.4}, we have
\begin{eqnarray}\label{equ4.2}
  \rho(t_{n+1})=e^{\tau A}\rho(t_n) \, e^{\tau A^{\dag}}+\sum_{k=1}^K\gamma_k
  \int_0^{\tau}L_k \, e^{sA} \, \rho(t_n) \, e^{sA^{\dag}} \, L_k^{\dag} \, ds+\sum_{j=1}^4R_{n,j},
\end{eqnarray}
where $R_{n,1}$ is defined in \eqref{equ2.4b}, and
  \begin{align*}
  &R_{n,2}=\sum_{k=1}^K\gamma_k
  \int_0^{\tau}\left(\int_s^{\tau}Ae^{(\tau-v)A}dv\right)L_ke^{sA}\rho(t_n)e^{sA^{\dag}} L_k^{\dag} \, ds,\\
  &R_{n,3}=\sum_{k=1}^K\gamma_k
  \int_0^{\tau}L_ke^{sA}\rho(t_n)e^{sA^{\dag}} L_k^{\dag}\left(\int_s^{\tau}A^{\dag}e^{(\tau-v)A^{\dag}}dv\right)ds,\\
  &R_{n,4}=\sum_{k=1}^K\gamma_k
  \int_0^{\tau}\left(\int_s^{\tau}Ae^{(\tau-v)A}dv\right)L_ke^{sA}\rho(t_n)e^{sA^{\dag}} L_k^{\dag}\left(\int_s^{\tau}A^{\dag}e^{(\tau-v)A^{\dag}}dv\right)ds.
\end{align*}
Denote the global error $E_n:=\rho(t_n)-\rho_n$. Subtracting \eqref{equ2.5} from \eqref{equ4.2}, we obtain the error recursion
\begin{equation}\label{equ4.3}
  E_{n+1}=e^{\tau A}E_ne^{\tau A^{\dag}}+\sum_{k=1}^K\gamma_k
  \int_0^{\tau}L_ke^{sA}  \, E_n  \, e^{sA^{\dag}} L_k^{\dag} \, ds+R_n,
\end{equation}
where $R_n=\sum_{j=1}^4R_{n,j}$.

Using Lemma \ref{lem4.1} and noting that $\|\rho(t)\|_1=1$ for any $t\geq0$, we have
\begin{eqnarray}\label{equ4.4}
\|R_{n,1}\|_1&\leq&\sum_{k,j=1}^K\gamma_k\gamma_j\int_0^{\tau}\left\|e^{(\tau-s)A}L_k
  \left(\int_0^se^{(s-v)A}L_j\rho(t_n+v)L_j^{\dag}e^{(s-v)A^{\dag}}dv\right)
  L_k^{\dag}e^{(\tau-s)A^{\dag}}\right\|_1ds\nonumber\\
  &\leq& \sum_{k,j=1}^K\gamma_k\gamma_j\int_0^{\tau}\|L_k\|_1^2
  \int_0^s\left\|e^{(s-v)A}L_j\rho(t_n+v)L_j^{\dag}e^{(s-v)A^{\dag}}\right\|_1dv\nonumber\\
  &\leq& \sum_{k,j=1}^K\gamma_k\gamma_j\|L_k\|_1^2\|L_j\|_1^2
  \int_0^{\tau}\int_0^s\|\rho(t_n+v)\|_1dvds\nonumber\\
  &=&\frac{1}{2}\tau^2\sum_{k,j=1}^K\gamma_k\gamma_j\|L_k\|_1^2\|L_j\|_1^2=\frac{1}{2}C_1^2\tau^2.
\end{eqnarray}
Similarly, we can obtain
\begin{equation}\label{equ4.5}
  \|R_{n,2}\|_1\leq\frac{1}{2}C_1 \, C_2 \, \tau^2,~~~\|R_{n,3}\|_1\leq\frac{1}{2}C_1\, C_2 \, \tau^2,
\end{equation}
and
\begin{equation}\label{equ4.6}
  \|R_{n,4}\|_1\leq\frac{1}{3}C_1 \, C_2^2 \, \tau^3.
\end{equation}
Combining \eqref{equ4.4}-\eqref{equ4.6}, we have
\begin{equation}\label{equ4.7}
  \|R_n\|_1\leq c_1 \, \tau^2,
\end{equation}
where $c_1=\frac{1}{2}C_1^2+C_1C_2+\frac{1}{3}C_1C_2^2 \, T$.

It follows from $\eqref{equ4.3}$, $\eqref{equ4.7}$, $E_0=0$ and Lemma \ref{lem4.3} that
\begin{eqnarray*}
  \|E_{n+1}\|_1&\leq&\left\|e^{\tau A}E_ne^{\tau A^{\dag}}+\sum_{k=1}^K\gamma_k
  \int_0^{\tau}L_ke^{sA} \, E_n  \, e^{sA^{\dag}} L_k^{\dag} \, ds\right\|_1+\|R_n\|_1\\
  &\leq& \|E_n\|_1+c_1\tau^2\leq \|E_{n-1}\|_1+2c_1\tau^2\\
  &\leq& \|E_0\|_1+c_1(n+1)\tau^2=c_1 \, t_{n+1} \, \tau,
\end{eqnarray*}
which completes the proof.
\end{proof}

\section{A low-rank exponential Euler integrator}
When the Lindblad equation is large-sized, i.e. $m\gg 1$, the computational cost of the full-rank exponential Euler integrator \eqref{equ2.5} would be expensive since algebraic Lyapunov equations with dimension $m$ need to be solved and $2m$ matrix-vector products associated with $e^{\tau A}$ are required at every time step. Now, in order to reduce computational costs while controlling accuracy, we develop a low-rank variant of the FREE integrator \eqref{equ2.5}. The aim is to find matrices $Z_n\in \mathbb{C}^{m\times r_n}$ with $r_n\ll m$ such that the solution of the Lindblad equation can be well approximated as
\[
\rho(t_n)\approx Z_nZ_n^{\dag} =:  \varrho_n ,
\]
where we denote with $\varrho_n$ the numerical low-rank solution
to the Lindblad equation in order to distinguish it from $\rho_n$, the full-rank
numerical solution of the same equation.

Let $\rho_0\approx \varrho_0=Z_0Z_0^{\dag}$ with $Z_0\in \mathbb{C}^{m\times r_0}$ and $\mbox{Tr}(\varrho_0)=1$. To derive our low-rank exponential Euler scheme, we first define
\begin{equation}\label{equ5.1}
  \tilde{\varrho}_{n+1}=e^{\tau A}\varrho_ne^{\tau A^{\dag}}+\sum_{k=1}^K\gamma_k \, L_k  \, \tilde{W}_n \, L_k^{\dag},~~~n=0,\ldots,N-1,
\end{equation}
where
\begin{equation}\label{equ5.2}
  \tilde{W}_n=\int_0^{\tau}e^{sA}\varrho_ne^{sA^{\dag}}ds.
\end{equation}
Instead of computing $\tilde{W}_n$ by solving an algebraic Lyapunov equation as in \eqref{equ2.7a}, we approximate the integral in \eqref{equ5.2} by the following quadrature formula
\begin{equation}\label{equ5.3}
  \int_0^{\tau}e^{sA}\varrho_ne^{sA^{\dag}}ds\approx
  \tau e^{\tau A}\varrho_ne^{\tau A^{\dag}}=
  \tau e^{\tau A}Z_nZ_n^{\dag}e^{\tau A^{\dag}}:=\tau \, V_nV_n^{\dag},
\end{equation}
where $V_n=e^{\tau A}Z_n$.
It then follows from \eqref{equ5.1} that
\begin{eqnarray*}
  \tilde{\varrho}_{n+1}\approx e^{\tau A}Z_nZ_n^{\dag}e^{\tau A^{\dag}}+\sum_{k=1}^K\gamma_k \, \tau \, L_kV_nV_n^{\dag}L_k^{\dag}
  :=\tilde{Z}_{n+1}\tilde{Z}_{n+1}^{\dag},
\end{eqnarray*}
where
\[\tilde{Z}_{n+1}=\left[V_n,\sqrt{\gamma_1\tau}L_1V_n,\ldots,\sqrt{\gamma_K\tau}L_KV_n\right].\]
By this notation we mean that the matrix $V_n$ and the $K$ matrices $\sqrt{\gamma_k\tau}L_kV_n$ are placed side by side.

Note that for many problems, the matrix exponential $e^{\tau A}$ or the product of matrix exponential times vectors $e^{\tau A}Z_n$  may be costly to compute and approximations may be
required. In the low-rank algorithm, we denote with $\mathfrak{e}^{\tau A}$ (resp. $\mathfrak{e}^{\tau A}Z_n$) an approximation of $e^{\tau A}$ (resp. $e^{\tau A}Z_n$). We will always assume that $\|e^{\tau A}\sigma e^{\tau A^{\dag}}-\mathfrak{e}^{\tau A}\sigma \mathfrak{e}^{\tau A^{\dag}}\|_1\leq C_3\, tol_1 \, \|\sigma\|_1$ for any $\sigma\in \mathbb{C}^{m\times m}$, where $tol_1>0$ is the error tolerance of the underlying matrix exponential algorithm and $C_3>0$.
In addition, note that the matrix $\tilde{Z}_{n+1}$ has more columns than $Z_n$, and likely also more than its rank. A better low-rank approximation can be obtained by applying a column compression technique \cite{Lang}, which can be done by truncating the singular value decomposition (SVD) of the given matrix. We denote with $\mathcal{T}_{tol_2}(\cdot)$ the truncated SVD of a matrix with error tolerance $tol_2>0$, that is, $\mathcal{T}_{tol_2}(Z)$ represents the best rank $r$ approximation of the matrix $Z\in \mathbb{C}^{m\times q}$ in Frobenius norm, where $r$ is the minimal integer such that $\sum_{j=r+1}^q\sigma^2_{j}(Z)\leq tol_2$. We then get
\begin{equation*}
  \left\|ZZ^{\dag}-\mathcal{T}_{tol_2}(Z)\mathcal{T}_{tol_2}(Z)^{\dag}\right\|_1=
  \sum_{j=r+1}^q\sigma^2_{j}(Z)\leq tol_2.
\end{equation*}

To summarize, our low-rank exponential Euler (LREE) scheme reads as follows:
\begin{subequations}\label{equ5.4}
  \begin{align}
  &V_n=\mathfrak{e}^{\tau A}Z_n,\label{equ5.4a}\\
  &\tilde{Z}_{n+1}=\left[V_n,\sqrt{\gamma_1\tau}L_1V_n,\ldots,\sqrt{\gamma_K\tau}L_KV_n\right],\label{equ5.4b}\\
  &\hat{Z}_{n+1}= \mathcal{T}_{tol_2}(\tilde{Z}_{n+1}),\label{equ5.4c}\\
  & Z_{n+1} = \frac{\hat{Z}_{n+1}}{\|\hat{Z}_{n+1}\|_F}.\label{equ5.4d}
\end{align}
\end{subequations}
Note that the LREE integrator requires to approximate $r_n$ matrix-exponential-vector products and
 compute a SVD of a tall matrix with $(K+1)r_n$ columns at every time step. The computational cost is much cheaper
than that of the FREE scheme, which requires to compute $2m$ $(2m\gg r_n)$ matrix-exponential-vector products and solve an algebraic Lyapunov equation of dimension $m$.
\begin{rem}
Note from $\varrho_{n+1}=Z_{n+1}Z_{n+1}^{\dag}$ and \eqref{equ5.4d} that the LREE scheme \eqref{equ5.4} is positivity and trace preserving:
\begin{equation}\label{equ5.5}
  \|\varrho_{n+1}\|_1=\|Z_{n+1}Z_{n+1}^{\dag}\|_1=\mbox{Tr}(Z_{n+1}Z_{n+1}^{\dag})=
  \frac{\mbox{Tr}(\hat{Z}_{n+1}\hat{Z}_{n+1}^{\dag})}{\|\hat{Z}_{n+1}\|_F^2}=1,~~~n=0,\ldots,N-1.
\end{equation}
However, in this case normalization is performed.
\end{rem}

We remark that the LREE \eqref{equ5.4} is equivalent to
\begin{subequations}\label{equ5.6}
  \begin{align}
  &\tilde{\varrho}_{n+1}=e^{\tau A}\varrho_n \, e^{\tau A^{\dag}}+\sum_{k=1}^K\gamma_k\int_0^{\tau}L_k \, e^{sA}\varrho_n \, e^{sA^{\dag}}L_k^{\dag} \, ds,\label{equ5.6a}\\
  &\hat{\varrho}_{n+1}=\tilde{\varrho}_{n+1}-\vartheta_{n+1},\label{equ5.6b}\\
  & \varrho_{n+1}=\frac{\hat{\varrho}_{n+1}}{\mbox{Tr}(\hat{\varrho}_{n+1})},\label{equ5.6c}
\end{align}
\end{subequations}
where $\hat{\varrho}_{n+1}=\hat{Z}_{n+1}\hat{Z}_{n+1}^{\dag}$ and the Hermitian matrix $\vartheta_{n+1}$ can be seen as the perturbation caused by the numerical quadrature \eqref{equ5.3}, the approximation to the matrix exponential times vector \eqref{equ5.4a} and the column compression procedure \eqref{equ5.4c}.
A bound on $\vartheta_{n+1}$ is given in the following theorem.

\begin{theorem}\label{thm5.0}
Assume that the matrix exponential algorithm used in the LREE scheme \eqref{equ5.4} satisfies
 $\|e^{\tau A}\sigma e^{\tau A^{\dag}}-\mathfrak{e}^{\tau A}\sigma\mathfrak{e}^{\tau A^{\dag}}\|_1\leq C_3 \, tol_1\, \|\sigma\|_1$ for any $\sigma\in \mathbb{C}^{m\times m}$ with tolerance $tol_1>0$. Let $tol_2>0$ be the error tolerance of the column compression algorithm used in the LREE scheme \eqref{equ5.4}.
  Then, it holds that
  \begin{equation}\label{equ5.7}
  \|\vartheta_{n+1}\|_1\leq C_1C_2 \, \tau^2+c_2\, tol_1+tol_2,~~~n=0,1,\ldots,N-1,
\end{equation}
where the constant $c_2>0$ depends on $C_1,~C_3,~T$, but is independent of $\tau$, $n$, $tol_1$ and $tol_2$.
\end{theorem}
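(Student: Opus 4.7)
The plan is to read $\vartheta_{n+1}=\tilde{\varrho}_{n+1}-\hat{\varrho}_{n+1}$ as the sum of three disjoint perturbations corresponding to the three approximations built into the LREE scheme \eqref{equ5.4}: the rectangle quadrature \eqref{equ5.3}, the inexact matrix-exponential action $\mathfrak{e}^{\tau A}$ in \eqref{equ5.4a}, and the truncated-SVD compression \eqref{equ5.4c}. To make this precise I would introduce two intermediate matrices
\begin{align*}
\tilde{\varrho}_{n+1}^{(1)}&:=e^{\tau A}\varrho_n e^{\tau A^{\dag}}+\sum_{k=1}^K\gamma_k\,\tau\, L_k e^{\tau A}\varrho_n e^{\tau A^{\dag}} L_k^{\dag},\\
\tilde{\varrho}_{n+1}^{(2)}&:=\mathfrak{e}^{\tau A}\varrho_n\mathfrak{e}^{\tau A^{\dag}}+\sum_{k=1}^K\gamma_k\,\tau\, L_k \mathfrak{e}^{\tau A}\varrho_n \mathfrak{e}^{\tau A^{\dag}} L_k^{\dag},
\end{align*}
observe that $\tilde{\varrho}_{n+1}^{(2)}=\tilde{Z}_{n+1}\tilde{Z}_{n+1}^{\dag}$ and $\hat{\varrho}_{n+1}=\hat{Z}_{n+1}\hat{Z}_{n+1}^{\dag}$ by construction, and telescope as
\begin{equation*}
\vartheta_{n+1}=\bigl(\tilde{\varrho}_{n+1}-\tilde{\varrho}_{n+1}^{(1)}\bigr)+\bigl(\tilde{\varrho}_{n+1}^{(1)}-\tilde{\varrho}_{n+1}^{(2)}\bigr)+\bigl(\tilde{\varrho}_{n+1}^{(2)}-\hat{\varrho}_{n+1}\bigr).
\end{equation*}

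For the quadrature contribution, the normalization step \eqref{equ5.4d} guarantees $\|\varrho_n\|_1=\mbox{Tr}(\varrho_n)=1$, so Lemma~\ref{lem4.1} yields $\|f(u)\|_1\le 1$ for $f(u):=e^{uA}\varrho_n e^{uA^{\dag}}$. Differentiating gives $f(\tau)-f(s)=\int_s^{\tau}\bigl(Af(u)+f(u)A^{\dag}\bigr)du$, so $\|f(\tau)-f(s)\|_1\le 2C_2(\tau-s)$; combining with the trace-norm inequality $\|BC\|_1\le\|B\|_1\|C\|_1$ and $\int_0^\tau(\tau-s)\,ds=\tau^2/2$ produces $\|\tilde{\varrho}_{n+1}-\tilde{\varrho}_{n+1}^{(1)}\|_1\le C_1C_2\tau^2$. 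For the exponential-approximation piece, the hypothesis directly delivers $\|e^{\tau A}\varrho_n e^{\tau A^{\dag}}-\mathfrak{e}^{\tau A}\varrho_n \mathfrak{e}^{\tau A^{\dag}}\|_1\le C_3\,tol_1$; sandwiching by $L_k$ inside the sum and using $\tau\le T$ then bounds $\|\tilde{\varrho}_{n+1}^{(1)}-\tilde{\varrho}_{n+1}^{(2)}\|_1\le C_3(1+C_1T)\,tol_1=:c_2\,tol_1$. Finally, the defining property of $\mathcal{T}_{tol_2}(\cdot)$ recalled just before \eqref{equ5.4} gives $\|\tilde{\varrho}_{n+1}^{(2)}-\hat{\varrho}_{n+1}\|_1\le tol_2$ at no extra cost. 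Adding the three bounds yields \eqref{equ5.7}.

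The main obstacle, such as it is, lies in the quadrature estimate: the error must scale as $\tau^2$ (any coarser $O(\tau)$ estimate would spoil consistency), which forces one to exploit the Lipschitz-in-$s$ bound $\|f(\tau)-f(s)\|_1\le 2C_2(\tau-s)$ rather than a naive triangle inequality on $\|f(s)\|_1+\|f(\tau)\|_1$. The other two contributions are direct applications of the stated tolerances, and the invocation of Lemma~\ref{lem4.1} rests on the invariant $\|\varrho_n\|_1=1$ already established in \eqref{equ5.5}, so that the constant $c_2=C_3(1+C_1T)$ depends only on $C_1$, $C_3$ and $T$ as claimed.
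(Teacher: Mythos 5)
Your proposal is correct and follows essentially the same route as the paper: the same three-term splitting of $\vartheta_{n+1}$ into quadrature, inexact-exponential, and compression contributions (your $\tilde{\varrho}_{n+1}^{(1)},\tilde{\varrho}_{n+1}^{(2)}$ are the paper's $\phi_{n+1},\varphi_{n+1}$), with the same bounds $C_1C_2\tau^2$, $(1+C_1T)C_3\,tol_1$, and $tol_2$, and the same constant $c_2$. The only cosmetic difference is that you bound the quadrature error by differentiating $f(u)=e^{uA}\varrho_n e^{uA^{\dag}}$ and using $\|f(u)\|_1\le 1$, whereas the paper Taylor-expands and pulls $A\varrho_n+\varrho_nA^{\dag}$ through the contraction of Lemma \ref{lem4.3}; these are equivalent.
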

\begin{proof}
  Let us first define
  \begin{equation*}
    \phi_{n+1}=e^{\tau A} \varrho_n \, e^{\tau A^{\dag}}+\tau\sum_{k=1}^K\gamma_k \, L_k e^{\tau A}  \,  \varrho_n \, e^{\tau A^{\dag}} \, L_k^{\dag},
  \end{equation*}
  and
  \begin{equation*}
    \varphi_{n+1}=\mathfrak{e}^{\tau A} \varrho_n \, \mathfrak{e}^{\tau A^{\dag}}+\tau\sum_{k=1}^K\gamma_k \, L_k \mathfrak{e}^{\tau A} \, \varrho_n \, \mathfrak{e}^{\tau A^{\dag}} \, L_k^{\dag}.
  \end{equation*}
  Using \eqref{equ5.6b}, we see that
  \begin{eqnarray}\label{equ5.7a}
    \|\vartheta_{n+1}\|_1&=&\|\tilde{\varrho}_{n+1}-\phi_{n+1}+\phi_{n+1}-\varphi_{n+1}+
    \varphi_{n+1}-\hat{\varrho}_{n+1}\|_1 \nonumber \\
    &\leq&\|\tilde{\varrho}_{n+1}-\phi_{n+1}\|_1
    +\|\phi_{n+1}-\varphi_{n+1}\|_1+\|\varphi_{n+1}-\hat{\varrho}_{n+1}\|_1.
  \end{eqnarray}
  Note that the above three components in the perturbation $\vartheta_{n+1}$ correspond to
  errors induced by the numerical quadrature, the approximation to the matrix exponential and the column compression procedure, respectively.

  By comparing \eqref{equ5.4} with \eqref{equ5.6}, we obtain $\varphi_{n+1}=\tilde{Z}_{n+1}\tilde{Z}_{n+1}^{\dag}$, $\hat{\varrho}_{n+1}=\hat{Z}_{n+1}\hat{Z}_{n+1}^{\dag}$ and
  $\hat{Z}_{n+1}= \mathcal{T}_{tol_2}(\tilde{Z}_{n+1})$. Then, it follows that
  \begin{equation}\label{equ5.7b}
    \|\varphi_{n+1}-\hat{\varrho}_{n+1}\|_1=
    \left\|\tilde{Z}_{n+1}\tilde{Z}_{n+1}^{\dag}-\mathcal{T}_{tol_2}(\tilde{Z}_{n+1})
    \mathcal{T}_{tol_2}(\tilde{Z}_{n+1})^{\dag}\right\|_1\leq tol_2.
  \end{equation}

To estimate the second component in \eqref{equ5.7a}, we define $\upsilon_n=e^{\tau A}\varrho_ne^{\tau A^{\dag}}-\mathfrak{e}^{\tau A}\varrho_n\mathfrak{e}^{\tau A^{\dag}}$.
Using \eqref{equ5.5} and the assumption on the matrix exponential algorithm, we get
\[\|\upsilon_n\|_1\leq C_3\, tol_1\, \|\varrho_n\|_1=C_3\,  tol_1.\]
Then, we obtain
\begin{eqnarray}\label{equ5.7c}
  \|\phi_{n+1}-\varphi_{n+1}\|_1&=& \left\|\upsilon_n+\tau\sum_{k=1}^K\gamma_kL_k\upsilon_nL_k^{\dag}\right\|_1
  \leq \left(1+\tau\sum_{k=1}^K\gamma_k\|L_k\|_1^2\right)\|\upsilon_n\|_1\nonumber\\
  &\leq&(1+\tau C_1) \, C_3\, tol_1\leq c_2\, tol_1,
\end{eqnarray}
where $c_2=(1+TC_1) \, C_3$.

Now, we estimate the first component in \eqref{equ5.7a}.
Expanding $e^{sA}\varrho_ne^{sA^{\dag}}$ in Taylor series with remainder in integral form yields
\begin{equation*}
  e^{sA}\varrho_ne^{sA^{\dag}}=e^{\tau A}\varrho_ne^{\tau A^{\dag}}
  -\int_s^{\tau}e^{\mu A} \, (A\varrho_n+\varrho_n A^{\dag}) \, e^{\mu A^{\dag}} \, d\mu.
\end{equation*}
Defining
\[\omega_n=\int_0^{\tau}e^{sA}\varrho_ne^{sA^{\dag}}ds-\tau e^{\tau A}\varrho_ne^{\tau A^{\dag}},\]
and using \eqref{equ5.5} and Lemma \eqref{equ4.1}, we obtain
\begin{eqnarray*}
  \|\omega_n\|_1=\left\|\int_0^{\tau}\int_s^{\tau}e^{\mu A}(A\varrho_n+\varrho_n A^{\dag}) \, e^{\mu A^{\dag}}d\mu \, ds\right\|_1\leq \int_0^{\tau}\int_s^{\tau} 2\|A\|_1
  \, d\mu \, ds=C_2 \, \tau^2.
\end{eqnarray*}
By the definitions of $\tilde{\varrho}_{n+1}$ and $\phi_{n+1}$, we get
\begin{eqnarray}\label{equ5.7d}
  \|\tilde{\varrho}_{n+1}-\phi_{n+1}\|_1&=& \left\|\sum_{k=1}^K \, \gamma_k \, L_k\, \omega_n \, L_k^{\dag}\right\|_1
  \leq \sum_{k=1}^K\gamma_k \, \|L_k\|_1^2 \|\omega_n\|_1\leq C_1\, C_2 \, \tau^2.
\end{eqnarray}
Finally, we conclude using \eqref{equ5.7a}-\eqref{equ5.7d}.
\end{proof}

Now, we derive the error estimate of the LREE integrator \eqref{equ5.4} (or \eqref{equ5.6}).
We assume that the initial low-rank approximation satisfies
\[\|\rho_0-\varrho_0\|_1\leq\delta,\]
for some $\delta>0$.

\begin{theorem}\label{thm5.1}
  Let $\rho(t)$ be the solution of the Lindblad equation \eqref{equ1.1} with $\rho(0)=\rho_0$, and $\{\varrho_n\}_{n=0}^N$ be the corresponding numerical solution generated by the LREE scheme \eqref{equ5.4}. Assume that the matrix exponential algorithm used in the LREE scheme \eqref{equ5.4} satisfies
 $\|e^{\tau A}\sigma e^{\tau A^{\dag}}-\mathfrak{e}^{\tau A}\sigma\mathfrak{e}^{\tau A^{\dag}}\|_1\leq C_3\, tol_1\, \|\sigma\|_1$ for any $\sigma\in \mathbb{C}^{m\times m}$. Let the error tolerances of  the matrix exponential algorithm  and the column compression satisfy $tol_1=\tau\epsilon_1$ and $tol_2=\tau\epsilon_2$ for some $\epsilon_1,~\epsilon_2>0$, respectively. Then, it holds that
  \[\|\rho(t_n)-\varrho_n\|_1\leq \tilde{c}_1 \, t_n \, \tau+\delta+2c_2 \, t_n\epsilon_1
  +2 \, t_n\epsilon_2,~~~0\leq n\leq N,\]
  where $\tilde{c}_1=c_1+2C_1C_2$ and the positive constants $c_1$ and $c_2$ are as defined in Theorem \ref{thm4.1} and Theorem \ref{thm5.0}, respectively.
\end{theorem}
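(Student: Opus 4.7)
The proof will follow the same template as Theorem \ref{thm4.1}, but applied to the equivalent form \eqref{equ5.6} of the LREE scheme and with two extra sources of error per step to control: the perturbation $\vartheta_{n+1}$ bounded by Theorem \ref{thm5.0}, and the renormalization in \eqref{equ5.6c}. Writing $E_n := \rho(t_n) - \varrho_n$, I would split
\[
\rho(t_{n+1}) - \varrho_{n+1} = \bigl(\rho(t_{n+1}) - \tilde{\varrho}_{n+1}\bigr) + \bigl(\tilde{\varrho}_{n+1} - \hat{\varrho}_{n+1}\bigr) + \bigl(\hat{\varrho}_{n+1} - \varrho_{n+1}\bigr)
\]
and estimate each piece separately.

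For the first piece, I would reuse the Taylor expansion \eqref{equ4.2} with the same remainder $R_n$ (satisfying $\|R_n\|_1 \le c_1\tau^2$) used in the proof of Theorem \ref{thm4.1}. Subtracting \eqref{equ5.6a} from \eqref{equ4.2} and applying Lemma \ref{lem4.3} to the linear part in $E_n$ yields $\|\rho(t_{n+1}) - \tilde{\varrho}_{n+1}\|_1 \le \|E_n\|_1 + c_1\tau^2$. For the second piece, Theorem \ref{thm5.0} together with the tolerance scaling $tol_1=\tau\epsilon_1$ and $tol_2=\tau\epsilon_2$ gives $\|\tilde{\varrho}_{n+1}-\hat{\varrho}_{n+1}\|_1 = \|\vartheta_{n+1}\|_1 \le C_1C_2\tau^2 + c_2\tau\epsilon_1 + \tau\epsilon_2$.

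The delicate third piece is the renormalization. Setting $\alpha_{n+1} := \mbox{Tr}(\hat{\varrho}_{n+1})$, the fact that $\hat{\varrho}_{n+1} = \hat{Z}_{n+1}\hat{Z}_{n+1}^{\dag}$ is Hermitian positive semidefinite gives $\|\hat{\varrho}_{n+1}\|_1 = \alpha_{n+1}$. By induction we may assume $\mbox{Tr}(\varrho_n) = 1$, so Lemma \ref{lem3.1} applied to \eqref{equ5.6a} yields $\mbox{Tr}(\tilde{\varrho}_{n+1}) = 1$, whence $\alpha_{n+1} = 1 - \mbox{Tr}(\vartheta_{n+1})$ and $|1-\alpha_{n+1}| \le \|\vartheta_{n+1}\|_1$. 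Using $\varrho_{n+1} = \hat{\varrho}_{n+1}/\alpha_{n+1}$, a direct calculation gives
\[
\|\hat{\varrho}_{n+1} - \varrho_{n+1}\|_1 = \left|1-\tfrac{1}{\alpha_{n+1}}\right|\,\|\hat{\varrho}_{n+1}\|_1 = |1-\alpha_{n+1}| \le \|\vartheta_{n+1}\|_1,
\]
so the potentially dangerous $1/\alpha_{n+1}$ cancels out exactly, leaving only a benign factor of $2$ when this bound is combined with the second piece.

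Combining the three estimates via the triangle inequality yields the one-step recursion
\[
\|E_{n+1}\|_1 \le \|E_n\|_1 + c_1\tau^2 + 2\|\vartheta_{n+1}\|_1 \le \|E_n\|_1 + \tilde{c}_1\tau^2 + 2c_2\tau\epsilon_1 + 2\tau\epsilon_2,
\]
with $\tilde{c}_1 = c_1 + 2C_1C_2$. Iterating from $\|E_0\|_1 \le \delta$ and summing $n+1$ increments, each multiplied by $\tau$, converts the $\tau^2$ and $\tau\epsilon_j$ terms into $t_{n+1}\tau$ and $t_{n+1}\epsilon_j$ and gives the stated bound. The main obstacle is the renormalization estimate: the naive inequality $\|\rho(t_{n+1}) - \hat{\varrho}_{n+1}/\alpha_{n+1}\|_1 \le \|\rho(t_{n+1}) - \hat{\varrho}_{n+1}\|_1/\alpha_{n+1}$ would introduce a multiplicative factor $1/\alpha_{n+1}$ that would compound across the $N = T/\tau$ time steps and spoil the linear-in-$t_n$ estimate; exploiting the positive semidefiniteness of $\hat{\varrho}_{n+1}$ to replace $\|\hat{\varrho}_{n+1}\|_1$ by $\alpha_{n+1}$ is the observation that removes this issue.
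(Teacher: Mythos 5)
Your proposal is correct, and it reaches the stated bound by a mildly different route than the paper. You run a single Gronwall-type recursion directly on the true error $E_n=\rho(t_n)-\varrho_n$: you reopen the proof of Theorem \ref{thm4.1} to subtract \eqref{equ5.6a} from the exact expansion \eqref{equ4.2}, apply Lemma \ref{lem4.3} to the (Hermitian) linear part, reuse the remainder bound $\|R_n\|_1\leq c_1\tau^2$ (which is legitimate, since $R_n$ involves only the exact solution), and absorb the quadrature/exponential/compression perturbation and the renormalization into the same recursion via Theorem \ref{thm5.0}. The paper instead splits globally through an auxiliary FREE sequence $\check{\rho}_n$ started from the low-rank initial value $\varrho_0$: the first piece is Theorem \ref{thm4.1} invoked as a black box, the second piece $\rho_{n+1}-\check{\rho}_{n+1}$ is controlled by $\delta$ through the non-expansiveness of the FREE map (Lemma \ref{lem4.3}), and only the third piece $\check{\rho}_{n+1}-\varrho_{n+1}$ carries the $\vartheta$-recursion. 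Your treatment of the renormalization step is in substance identical to the paper's: both use $\mbox{Tr}(\tilde{\varrho}_{n+1})=\mbox{Tr}(\varrho_n)=1$ from Lemma \ref{lem3.1}, the positive semidefiniteness of $\hat{\varrho}_{n+1}$ to identify its trace norm with its trace, and hence $\|\hat{\varrho}_{n+1}-\varrho_{n+1}\|_1=|1-\mbox{Tr}(\hat{\varrho}_{n+1})|=|\mbox{Tr}(\vartheta_{n+1})|\leq\|\vartheta_{n+1}\|_1$, which is exactly the cancellation that avoids the compounding $1/\mbox{Tr}(\hat{\varrho}_{n+1})$ factors you rightly flag as the main danger. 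What each approach buys: yours is more direct and dispenses with the auxiliary sequence, at the cost of re-deriving the local truncation estimate inside the argument; the paper's is more modular, cleanly isolating the time-discretization error, the propagation of the initial low-rank error $\delta$, and the per-step low-rank perturbations. Both yield the same constants $\tilde{c}_1=c_1+2C_1C_2$ and the same final estimate.
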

\begin{proof}
 First, we split the global error $\rho(t_{n+1})-\varrho_{n+1}$ as follows:
\begin{equation}\label{equ5.8}
  \rho(t_{n+1})-\varrho_{n+1}=(\rho(t_{n+1})-\rho_{n+1})+(\rho_{n+1}-\check{\rho}_{n+1})+(\check{\rho}_{n+1}-\varrho_{n+1}),
\end{equation}
where the auxiliary quantities $\check{\rho}_{n+1}$ are derived from the FREE integrator with low-rank initial value $\varrho_0$. That means
\begin{equation}\label{equ5.9}
  \check{\rho}_{n+1}=e^{\tau A}\check{\rho}_ne^{\tau A^{\dag}}+\sum_{k=1}^K\gamma_k
  \int_0^{\tau}L_ke^{sA}\check{\rho}_ne^{sA^{\dag}} L_k^{\dag}ds,~~~0\leq n\leq N-1,
\end{equation}
where $\check{\rho}_0=\varrho_0$. Note that the global error is decomposed into three components,
where the first component $\rho(t_{n+1})-\rho_{n+1}$ denotes the global error of the FREE integrator \eqref{equ2.5}, and we have presented the error estimate for this part in Theorem \ref{thm4.1}. The second component $\rho_{n+1}-\check{\rho}_{n+1}$ is the difference between the full-rank solutions with initial values $\rho_0$ and low-rank $\varrho_0$. The third component $\check{\rho}_{n+1}-\varrho_{n+1}$ is the difference of the solutions obtained with the FREE integrator and the LREE integrator with the same initial value $\varrho_0$.

Subtracting \eqref{equ5.9} from \eqref{equ2.5} and applying Lemma \ref{lem4.3}, we obtain
\begin{eqnarray}\label{equ5.10}
  \|\rho_{n+1}-\check{\rho}_{n+1}\|_1&=&\left\|e^{\tau A}(\rho_n-\check{\rho}_n)e^{\tau A^{\dag}}+\sum_{k=1}^K\gamma_k
  \int_0^{\tau}L_ke^{sA}(\rho_n-\check{\rho}_n)e^{sA^{\dag}} L_k^{\dag}ds\right\|_1\nonumber\\
  &\leq&\|\rho_n-\check{\rho}_n\|_1\leq\|\rho_0-\check{\rho}_0\|_1=\|\rho_0-\varrho_0\|_1=\delta.
\end{eqnarray}

The third error component $\check{\rho}_{n+1}-\varrho_{n+1}$ can be decomposed as
\begin{equation}\label{equ5.11}
  \check{\rho}_{n+1}-\varrho_{n+1}=(\check{\rho}_{n+1}-\tilde{\varrho}_{n+1})
  +(\tilde{\varrho}_{n+1}-\hat{\varrho}_{n+1})+(\hat{\varrho}_{n+1}-\varrho_{n+1}).
\end{equation}
Subtracting \eqref{equ5.6a} from \eqref{equ5.9} and applying Lemma \ref{lem4.3}, we have
\[\|\check{\rho}_{n+1}-\tilde{\varrho}_{n+1}\|_1\leq\|\check{\rho}_n-\varrho_n\|_1.\]
It then follows from \eqref{equ5.5}, \eqref{equ5.6}, \eqref{equ5.11}, Lemma \ref{lem3.1} and Theorem \ref{thm5.0} that
\begin{eqnarray}\label{equ5.12}
  \|\check{\rho}_{n+1}-\varrho_{n+1}\|_1&\leq& \|\check{\rho}_{n+1}-\tilde{\varrho}_{n+1}\|_1
  +\|\tilde{\varrho}_{n+1}-\hat{\varrho}_{n+1}\|_1+\|\hat{\varrho}_{n+1}-\varrho_{n+1}\|_1\nonumber\\
  &\leq& \|\check{\rho}_n-\varrho_n\|_1+\|\vartheta_{n+1}\|_1+\|(\mbox{Tr}(\hat{\varrho}_{n+1})-1)\varrho_{n+1}\|_1\nonumber\\
  &=&\|\check{\rho}_n-\varrho_n\|_1+\|\vartheta_{n+1}\|_1+|\mbox{Tr}(\tilde{\varrho}_{n+1})-\mbox{Tr}(\vartheta_{n+1})-1|\cdot\|\varrho_{n+1}\|_1\nonumber\\
  &=&\|\check{\rho}_n-\varrho_n\|_1+\|\vartheta_{n+1}\|_1+|\mbox{Tr}(\varrho_n)-\mbox{Tr}(\vartheta_{n+1})-1|\nonumber\\
  &=&\|\check{\rho}_n-\varrho_n\|_1+\|\vartheta_{n+1}\|_1+|\mbox{Tr}(\vartheta_{n+1})|\nonumber\\
  &\leq&\|\check{\rho}_n-\varrho_n\|_1+2\|\vartheta_{n+1}\|_1 \nonumber\\
  &\leq& \|\check{\rho}_n-\varrho_n\|_1+2\tau(C_1C_2\tau+c_2\epsilon_1+\epsilon_2)\nonumber\\
  &\leq&\|\check{\rho}_0-\varrho_0\|_1+2(n+1)\tau(C_1C_2\tau+c_2\epsilon_1+\epsilon_2)\nonumber\\
  &=&2t_{n+1}(C_1C_2\tau+c_2\epsilon_1+\epsilon_2).
\end{eqnarray}
The desired result follows from \eqref{equ5.8}, Theorem \ref{thm4.1}, \eqref{equ5.10} and
\eqref{equ5.12}.
\end{proof}

\section{Numerical experiments}
In this section, we present results of numerical experiments
that demonstrate the accuracy and efficiency of our FREE integrator \eqref{equ2.7} and LREE integrator \eqref{equ5.4}. All simulations are carried out using Python 3.12.4 on a laptop with Intel(R) Core(TM) i7-8565U CPU@1.80GHz and 16GB RAM. We use the codes in the Python package \emph{scipy} to compute the matrix exponential and the products of matrix exponential with vectors. The codes are based on algorithms proposed in \cite{Al-Mohy1,Al-Mohy2}. We also use the code in \emph{scipy} to solve the algebraic Lyapunov equations in the FREE scheme \eqref{equ2.7}, which is based on the Bartels-Stewart algorithm; see, e.g., \cite{Gajic,Hamilton}.

To show the accuracy of the proposed scheme, we display the relative errors defined as follows:
 \[Error=\frac{\|\rho_N-\rho(T)\|_1}{\|\rho(T)\|_1},\]
 and test its convergence order. The exact solution $\rho(T)$ is computed through the vectorized Lindblad equation \eqref{equ1.2} using matrix exponential.

The initial condition of the Lindblad equation is mostly chosen to be the Greenberger-Horne-Zeilinger (GHZ) state
\begin{eqnarray*}
  \rho(0)&=&\frac{1}{2}\left(|0\rangle^{\otimes K}\langle0|^{\otimes K}+
|0\rangle^{\otimes K}\langle d-1|^{\otimes K}+|d-1\rangle^{\otimes K}\langle0|^{\otimes K}+|d-1\rangle^{\otimes K}\langle d-1|^{\otimes K}\right)\\
&=&\frac{1}{2}\begin{bmatrix}
  1&0&\cdots&0&1\\
  0&0&\cdots&0&0\\
  \vdots&\vdots&\ddots&\vdots&\vdots\\
  0&0&\cdots&0&0\\
  1&0&\cdots&0&1
\end{bmatrix}\in \mathbb{R}^{d^K\times d^K}.
\end{eqnarray*}
  The GHZ state for $K$ qudits is a highly entangled quantum state that is a generalization of the $K$-qubit GHZ state, which is a superposition of all qubits being in state $|0\rangle$ and all qubits being in state $|1\rangle$. GHZ states are a valuable resource in quantum information processing due to their strong entanglement properties, which can be utilized for various tasks such as quantum teleportation, quantum error correction, and quantum cryptography.

We consider the Lindblad equation with the X-X Ising Chain Hamiltonian given by
\begin{equation}\label{equ6.1}
  H(t)=\sum_{k=1}^K(aJ_z^{(k)}+b(J_z^{(k)})^2)+\sum_{k=1}^{K-1}\sum_{l=k+1}^Kg_{kl}(t)J_x^{(k)}J_x^{(l)},
\end{equation}
where
\[J_w^{(k)}=\underbrace{I_d\otimes\cdots\otimes I_d}_{k-1}\otimes J_w\otimes \underbrace{I_d\otimes\cdots\otimes I_d}_{K-k},~~~w=x,z,~~~k=1,\ldots,K,\]
$I_d$ is the $d\times d$ identity matrix and the $J_x,J_z$ operators are defined as follows
 \begin{equation*}
J_x=\frac{1}{2}\begin{bmatrix}
  0&\sqrt{d-1}&0&\cdots&0\\
  \sqrt{d-1}&0&\sqrt{2(d-2)}&0&\vdots\\
  \vdots&\ddots&\ddots&\ddots&\vdots\\
  \vdots&\ddots&\sqrt{(d-2)2}&0&\sqrt{d-1}\\
  0&\cdots&0&\sqrt{d-1}&0
\end{bmatrix}\in \mathbb{R}^{d\times d},
 \end{equation*}
 and
 \begin{equation*}
   J_z=\begin{bmatrix}
  \frac{d-1}{2}&0&\cdots&0\\
  0&\frac{d-3}{2}&0&\vdots\\
  \vdots&\ddots&\ddots&0\\
  0&\cdots&0&\frac{1-d}{2}
\end{bmatrix}\in \mathbb{R}^{d\times d}.
 \end{equation*}
The first sum in \eqref{equ6.1} is the so-called drift-Hamiltonian, which characterizes the inner dynamics of the qudit. The term $aJ_z^{(k)}$ allows for an even spacing of eigenvalues of amplitude $a$ between the levels, while the term $b(J_z^{(k)})^2$ allows for an uneven spacing of amplitude $b$ and therefore, the $d$ levels become individually addressable. The second sum is the interaction Hamiltonians, where $g_{kl}(t)$ is the coupling strength. If $g_{kl}$ depends on time, we have a
time-dependent Hamiltonian. The Ising-type Hamiltonian describes the interaction between two qudits as a product of their respective local operators.
The jump operators $L_k$ are always set to be $L_k=J_z^{(k)}$, unless stated otherwise.

\subsection{Tests on the FREE integrator}

\begin{figure}
{ \centering
\includegraphics[width=0.48\textwidth,height=0.23\textheight]{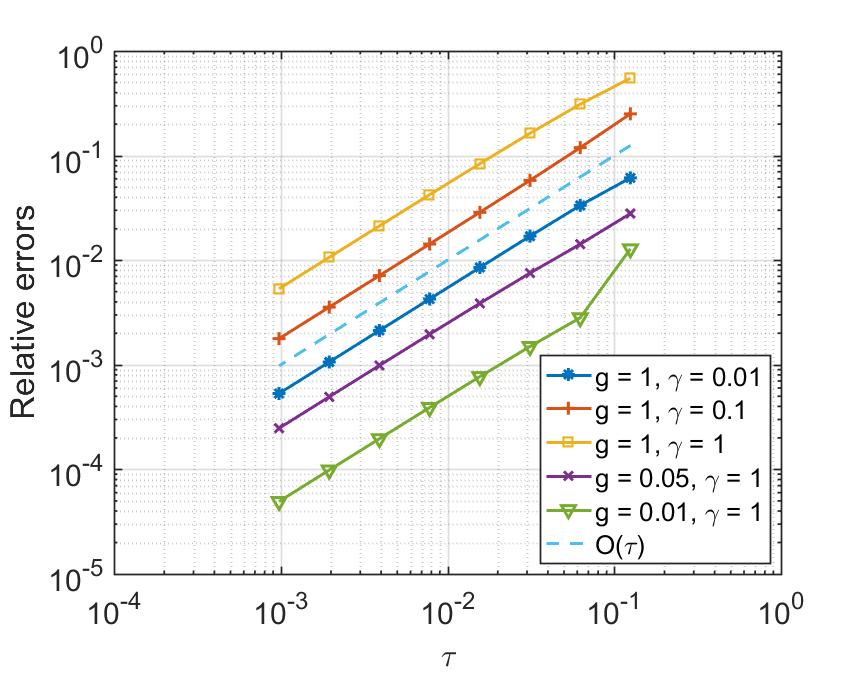}
} { \centering
\includegraphics[width=0.48\textwidth,height=0.23\textheight]{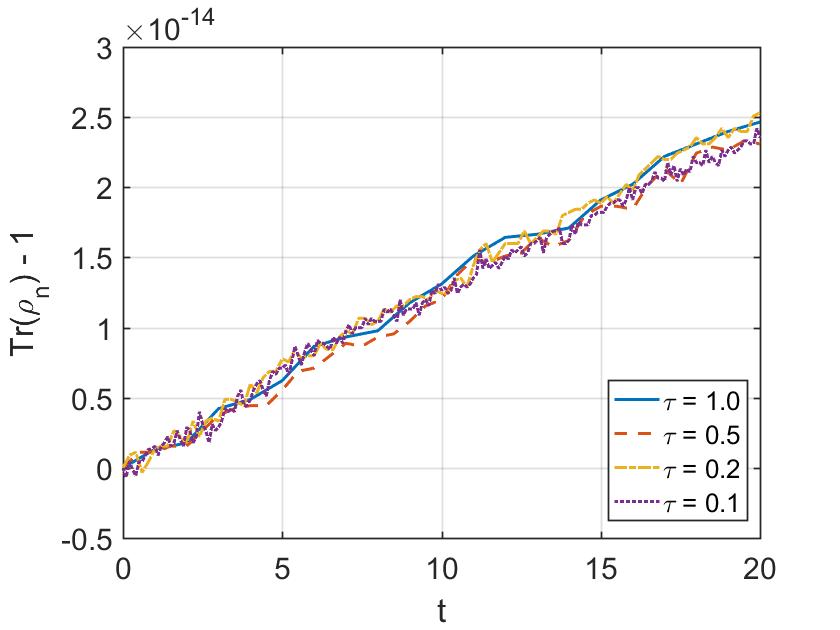}
}\\ { \centering
\includegraphics[width=0.48\textwidth,height=0.23\textheight]{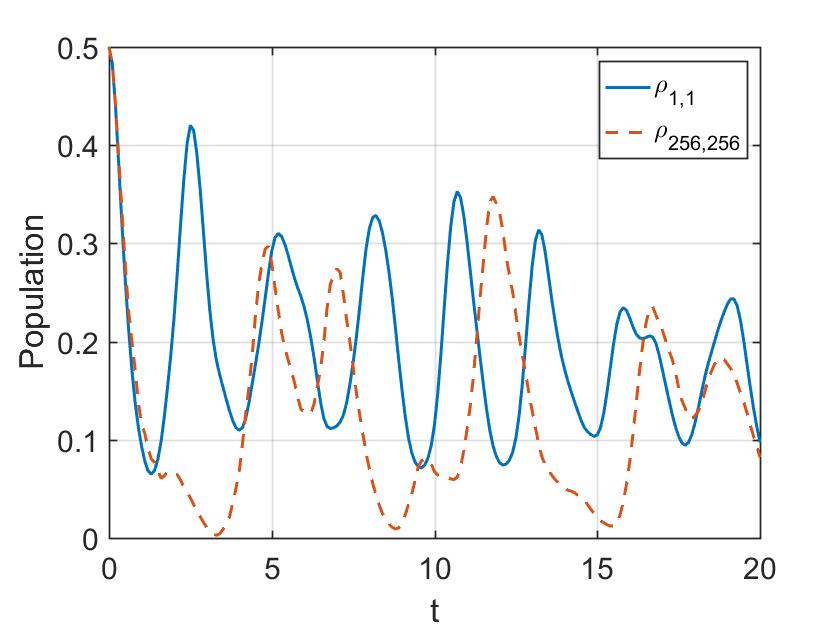}
} { \centering
\includegraphics[width=0.48\textwidth,height=0.23\textheight]{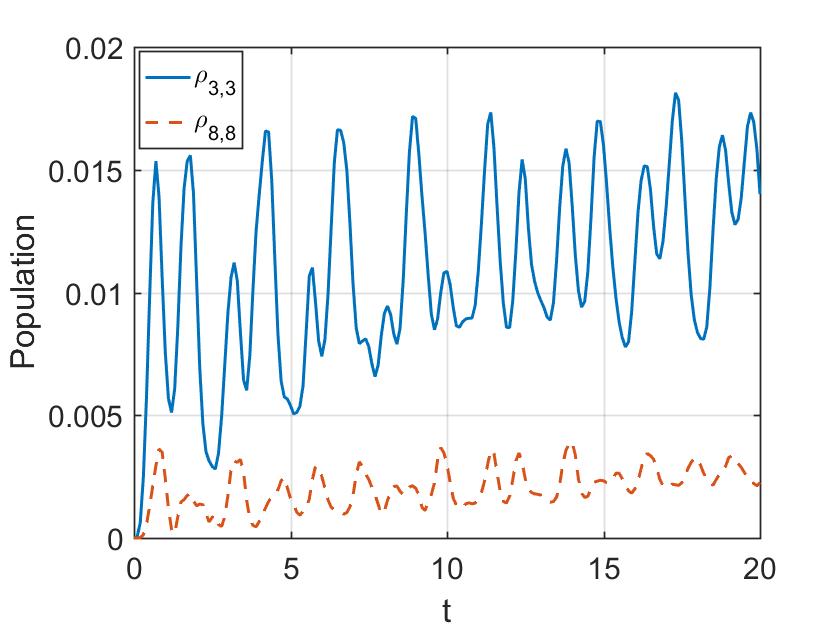}
}\caption{Numerical results of the FREE integrator for the Lindblad equation with Hamiltonian \eqref{equ6.1} ($d=4$, $K=4$, $a=1.5$, $b=0.5$, $\gamma_k=\gamma$, $g_{kl}(t)\equiv g$). Top left: relative errors at $T=1$ vs step sizes for different $g$ and $\gamma$. Top right: evolutions of $\mbox{Tr}(\rho_n)-1$ vs time for different $\tau$ with fixed $g=1$, $\gamma=0.01$ and $T=20$. Bottom left: evolutions of the populations $\rho_{1,1}$ and $\rho_{256,256}$ with $\tau=0.1$, $g=1$, $\gamma=0.01$ and $T=20$. Bottom right: evolutions of the populations $\rho_{3,3}$ and $\rho_{8,8}$ with $\tau=0.1$, $g=1$, $\gamma=0.01$ and $T=20$.}
\label{fig6.1}
\end{figure}

In Figure \ref{fig6.1}, we present numerical results of the proposed FREE integrator for the Lindblad equation with time-independent Hamiltonian ($g_{kl} =g=1$). The solvers for matrix
exponential and Lyapunov equation used in FREE integrator were set with default tolerance (machine precision $10^{-16}$).
We see that the expected first-order accuracy is achieved for the FREE scheme, which is in agreement with the convergence estimate stated in Theorem \ref{thm4.1}. It can also be seen that $\mbox{Tr}(\rho_n)-1$ is close to machine precision for different time step sizes, which demonstrates the preservation of unit trace unconditionally. We also see that the populations $\rho_{1,1},\rho_{3,3},\rho_{8,8}$ and $\rho_{256,256}$ stay nonnegative in the interval $[0,1]$, which is consistent with the positive property of the density matrix. Note that although we display evolutions of populations $\rho_{1,1},\rho_{3,3},\rho_{8,8}$ and $\rho_{256,256}$, the positive features are visible in all populations.

In Figure \ref{fig6.2}, we display numerical results of the FREE scheme for the Lindblad equation with time-dependent Hamiltonian. Here the coupling strength was set to
 \[g_{kl}(t)=\delta_{k,l-1}\cdot g(t)=\left\{\begin{array}{ll}
g(t)~~~\mbox{if}~~~k=l-1,\\
0~~~~~~\mbox{else},
\end{array} \right.\]
and we chose $g(t)$ as $(1+t)^{\frac{1}{4}}$, $e^{-t}$, $\sin(2\pi t)$ and $10\sin(10\pi t)$, respectively.
 Again, we see that the FREE scheme achieves first-order accuracy in time. The numerical solutions obtained by our FREE scheme preserve positivity and unit trace unconditionally.

\begin{figure}
{ \centering
\includegraphics[width=0.48\textwidth,height=0.23\textheight]{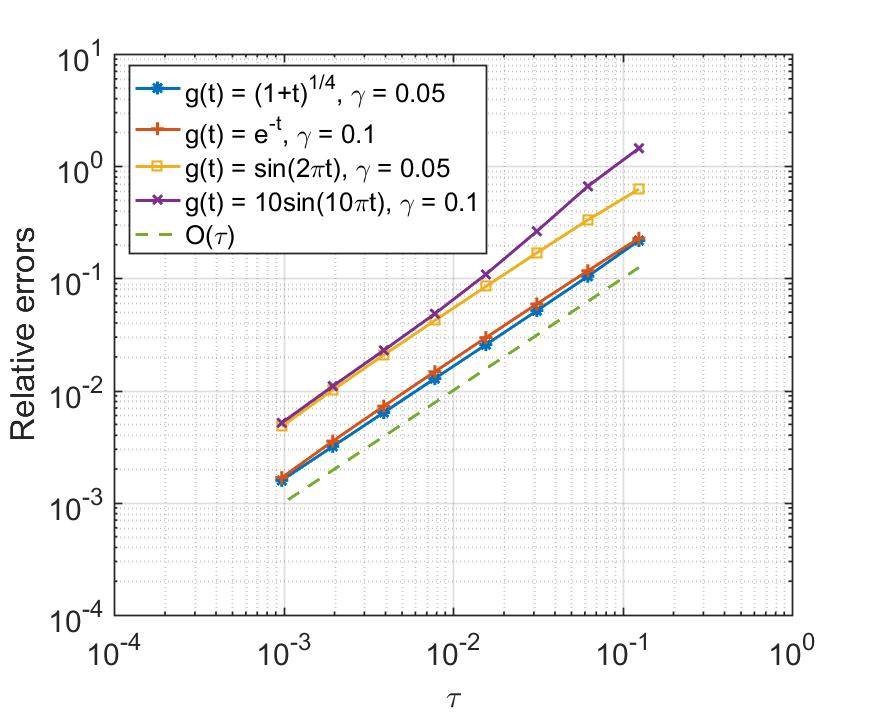}
} { \centering
\includegraphics[width=0.48\textwidth,height=0.23\textheight]{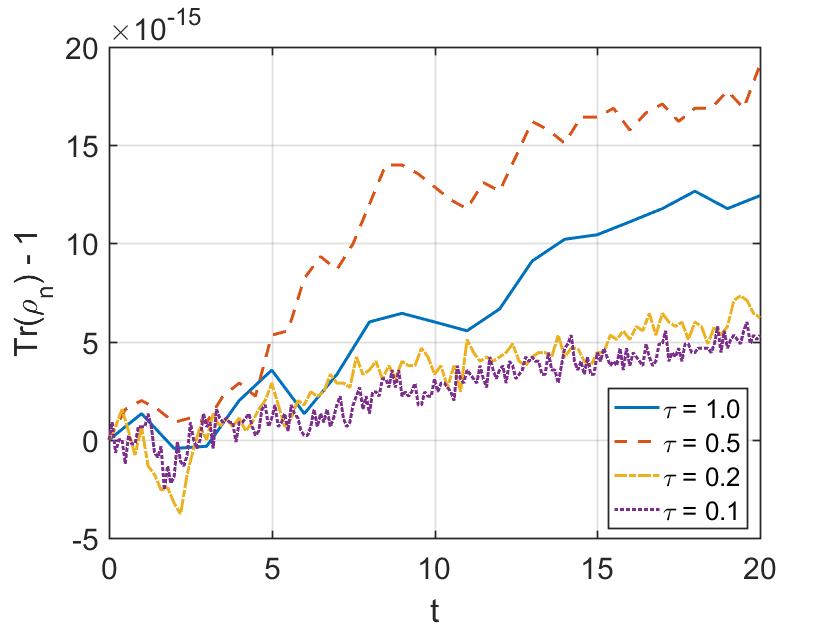}
}\\ { \centering
\includegraphics[width=0.48\textwidth,height=0.23\textheight]{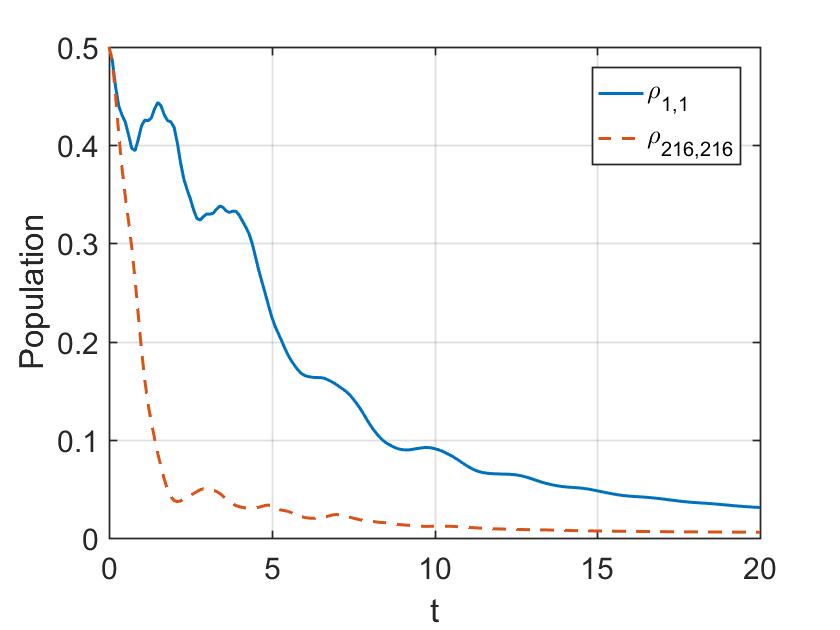}
} { \centering
\includegraphics[width=0.48\textwidth,height=0.23\textheight]{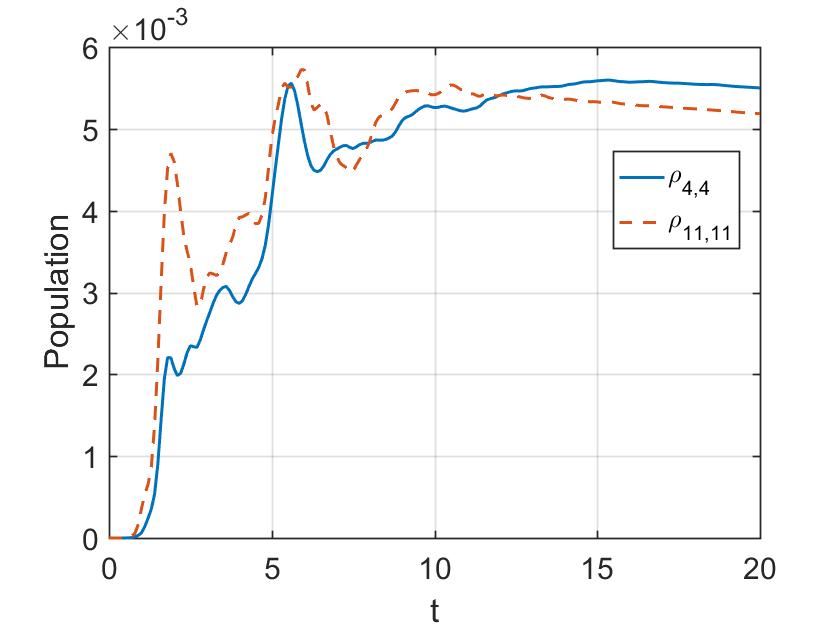}
}\caption{Numerical results of the FREE integrator for the Lindblad equation with time-dependent Hamiltonian \eqref{equ6.1} ($d=6$, $K=3$, $a=1$, $b=1$, $\gamma_k=\gamma$, $g_{kl}(t)= \delta_{k,l-1}\cdot g(t)$). Top left: relative errors at $T=1$ vs step sizes for different $g(t)$ and $\gamma$. Top right: evolutions of $\mbox{Tr}(\rho_n)-1$ vs time for different $\tau$ with $g(t)=(1+t)^{\frac{1}{4}}$, $\gamma=0.05$ and $T=20$. Bottom left: evolutions of the populations $\rho_{1,1}$ and $\rho_{216,216}$ with $\tau=0.1$, $g(t)=(1+t)^{\frac{1}{4}}$, $\gamma=0.05$ and $T=20$. Bottom right: evolutions of the populations $\rho_{4,4}$ and $\rho_{11,11}$ with $\tau=0.1$, $g(t)=(1+t)^{\frac{1}{4}}$, $\gamma=0.05$ and $T=20$.}
\label{fig6.2}
\end{figure}

\subsection{Tests on the LREE integrator}
Now, we test the performance of the LREE scheme for the Lindblad equation with Hamiltonian \eqref{equ6.1}. First, we test the error behavior of the LREE scheme with respect to time discretization and the initial low-rank error $\delta$. In this case, the initial condition of the Lindblad equation
is set to be
\[\rho(0)=\left(1-\frac{\delta}{2}\right)q_1q_1^\top+\frac{\delta}{2}q_2q_2^\top,\]
where $q_1$ and $q_2$ are orthonormal vectors and we obtain them from SVD
of a random $m\times 2$ real matrix. The low-rank initial factor is set to $Z_0=q_1$ and clearly we have $\|\rho(0)-Z_0Z_0^{\dag}\|_1=\delta$. The error tolerances of the matrix exponential and column compression are set to be $tol_1=10^{-10}$ and $tol_2=10^{-10}$, respectively. In Figure \ref{fig6.3}, we present the error behavior of the LREE scheme with various choices of $\delta$. As we can see, the LREE scheme has convergence of order one when the initial low-rank error is small enough. When the initial low-rank error is dominant, decreasing the step size does not lead to more accurate solution. These numerical results are in agreement with the error estimate stated in Theorem \ref{thm5.1} and we note that the estimate with respect to $\delta$ can not be made sharper.

\begin{figure}
{ \centering
\includegraphics[width=0.48\textwidth]{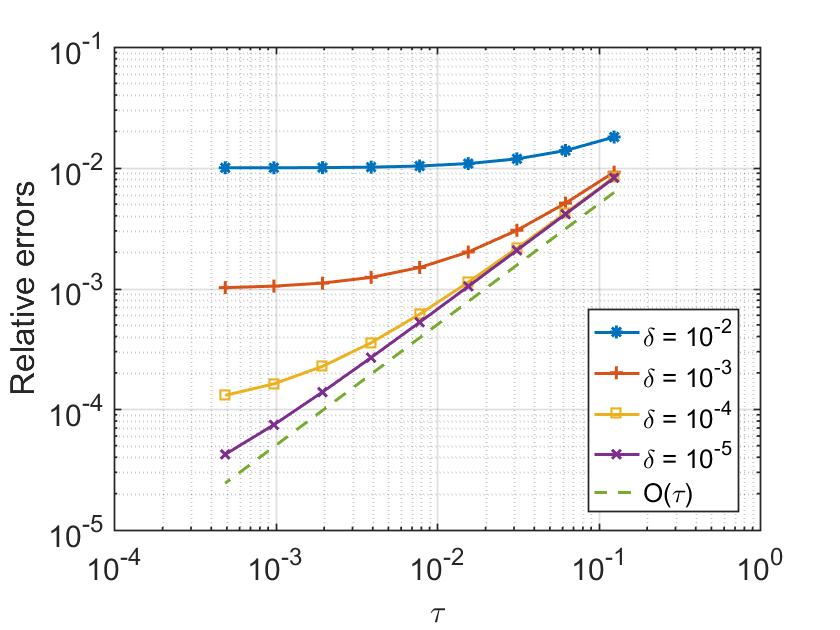}
}  { \centering
\includegraphics[width=0.48\textwidth]{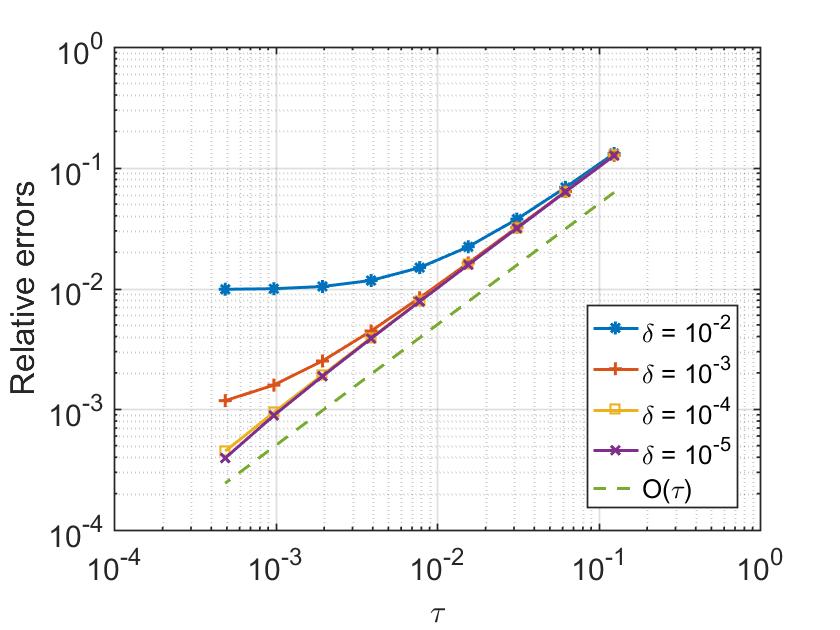}
} \caption{Numerical results of the LREE scheme with $tol_1=10^{-10}$ and $tol_2=10^{-10}$ for the Lindblad equation with Hamiltonian \eqref{equ6.1}. Left: relative errors at $T=1$ vs step sizes for the Lindblad problem with $d=4$, $K=4$, $a=1.5$, $b=0.5$, $\gamma_k=\gamma=0.01$, $g_{kl}(t)\equiv g=1$. Right: relative errors at $T=1$ vs step sizes for the Lindblad problem with $d=6$, $K=3$, $a=1$, $b=1$, $\gamma_k=\gamma=0.05$, $g_{kl}(t)= \delta_{k,l-1}\cdot(1+t)^{\frac{1}{4}}$.}
\label{fig6.3}
\end{figure}

Second, we focus on error behavior with respect to the time discretization and low-rank approximation. The initial condition is chosen to be the GHZ state. The initial low-rank factor is set to be $Z_0=\frac{1}{\sqrt{2}}[1,0,\ldots,0,1]^\top$. In this case, $\rho(0)=Z_0Z_0^{\dag}$ and there is no initial low-rank approximation error ($\delta=0$). The error tolerance of the code for matrix exponential is set to be $tol_1=10^{-10}$. The error tolerance of the column compression algorithm is set to be $tol_2=\epsilon_2\tau$. In Figure \ref{fig6.4}, we present the error behavior of the LREE scheme with various $\epsilon_2$. We see that the LREE scheme achieves first-order accuracy when the low-rank approximation error (due to the column compression) is small enough. Decreasing the step size cannot lead to more accurate solution when the low-rank approximation error is dominant. These results are also in agreement with the error estimate stated in Theorem \ref{thm5.1}.

\begin{figure}
{ \centering
\includegraphics[width=0.48\textwidth]{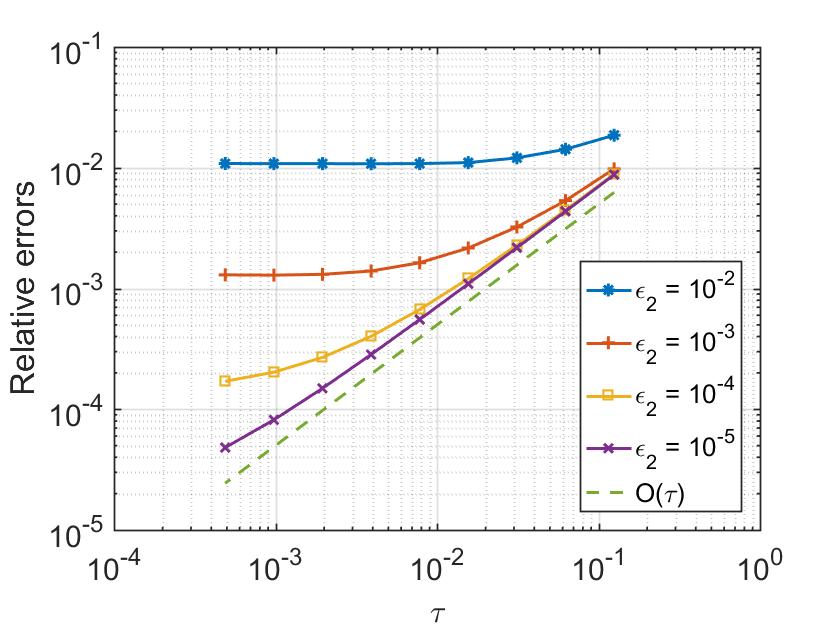}
}  { \centering
\includegraphics[width=0.48\textwidth]{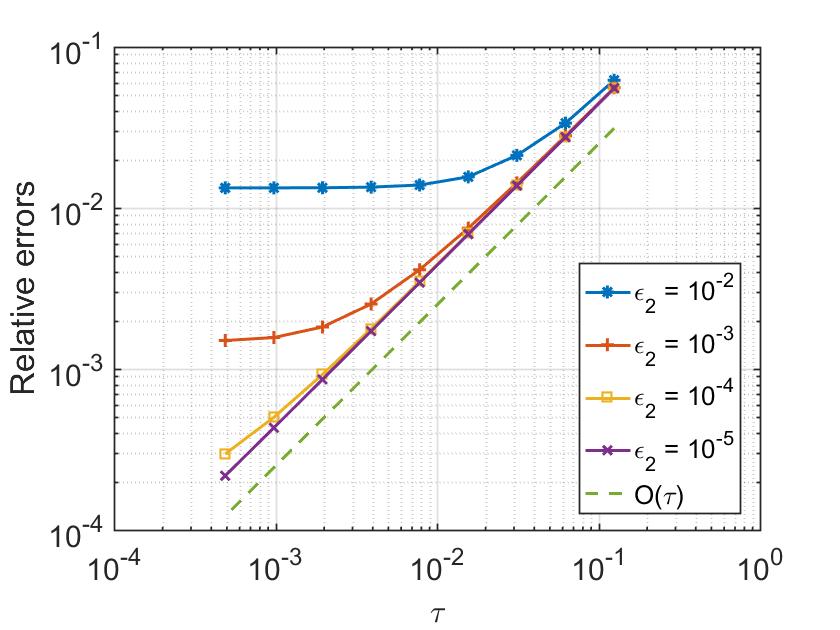}
} \caption{Numerical results of the LREE scheme  with $\delta=0$ and $tol_1=10^{-10}$ for the Lindblad equation with Hamiltonian \eqref{equ6.1}. Left: relative errors at $T=1$ vs step sizes for the Lindblad problem with $d=4$, $K=4$, $a=1.5$, $b=0.5$, $\gamma_k=\gamma=0.01$, $g_{kl}(t)\equiv g=1$. Right: relative errors at $T=1$ vs step sizes for the Lindblad problem with $d=6$, $K=3$, $a=1$, $b=1$, $\gamma_k=\gamma=0.05$, $g_{kl}(t)= \delta_{k,l-1}\cdot(1+t)^{\frac{1}{4}}$.}
\label{fig6.4}
\end{figure}

Finally, we test the performance of the LREE scheme in terms of errors due to time discretization and the approximation of the products of matrix exponential with vectors. We set
$Z_0=\frac{1}{\sqrt{2}}[1,0,\ldots,0,1]^\top$ and in this case $\delta=0$. The error tolerance of the column compression is set to $tol_2=10^{-10}$. The error tolerance of the algorithm for matrix exponential is set to be $tol_1=\epsilon_1\tau$. In Figure \ref{fig6.5}, we show the error behavior of the LREE scheme with various $\epsilon_1$. We see that the LREE scheme achieves first-order accuracy even when $\epsilon_1$ is much bigger than $\tau$. This error behavior is different from those observed in Figures \ref{fig6.3} and \ref{fig6.4}. We remark that this performance may be caused by the matrix exponential algorithm \cite{Al-Mohy2} that we used. In the Theorem \ref{thm5.1}, we assume that the matrix exponential algorithm satisfies $\|e^{\tau A}\sigma e^{\tau A^{\dag}}-\mathfrak{e}^{\tau A}\sigma\mathfrak{e}^{\tau A^{\dag}}\|_1\leq C_3\, tol_1\|\sigma\|_1$, where $tol_1$ is the input error tolerance. It seems that the error constant $C_3$ depends on $\tau$, which was verified in Figure \ref{fig6.6}, in which we show the values of $C_e:=\|e^{\tau A_n}\sigma e^{\tau A_n^{\dag}}-\mathfrak{e}^{\tau A_n}\sigma\mathfrak{e}^{\tau A_n^{\dag}}\|_1/tol_1$ for various $\tau$ and $tol_1$, where $\sigma>0$ is a random matrix with $\|\sigma\|_1=1$ and we set $A_n=A$ for equation with time-independent Hamiltonian and $A_n=A_{10}$ for problem with time-dependent Hamiltonian.

\begin{figure}
{ \centering
\includegraphics[width=0.48\textwidth]{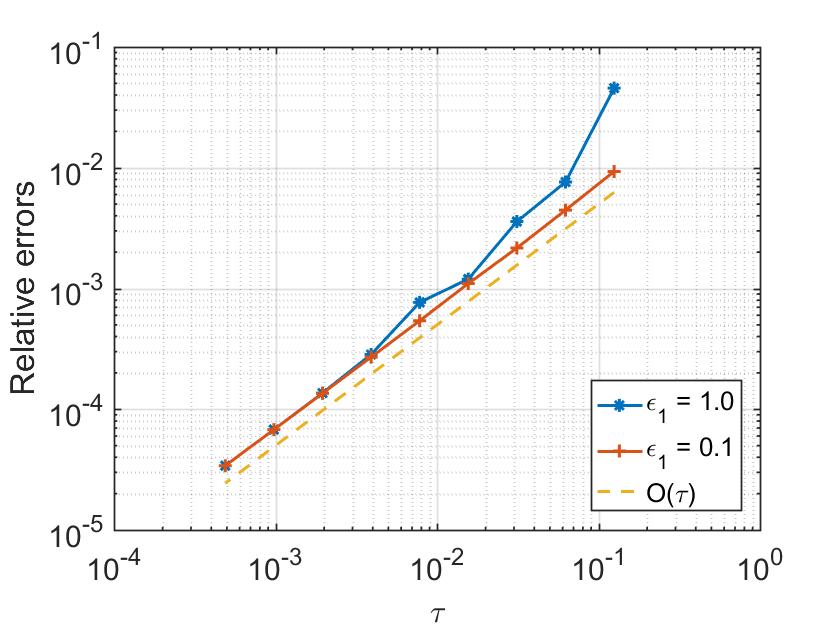}
}  { \centering
\includegraphics[width=0.48\textwidth]{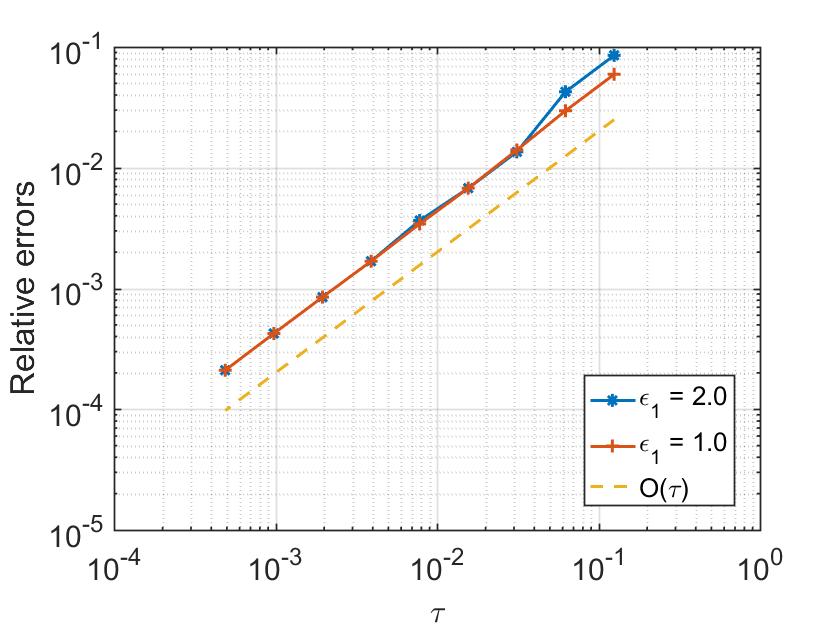}
} \caption{Numerical results of the LREE scheme with $\delta=0$ and $tol_2=10^{-10}$ for the Lindblad equation with Hamiltonian \eqref{equ6.1}. Left: relative errors at $T=1$ vs step sizes for the Lindblad problem with $d=4$, $K=4$, $a=1.5$, $b=0.5$, $\gamma_k=\gamma=0.01$, $g_{kl}(t)\equiv g=1$. Right: relative errors at $T=1$ vs step sizes for the Lindblad problem with $d=6$, $K=3$, $a=1$, $b=1$, $\gamma_k=\gamma=0.05$, $g_{kl}(t)= \delta_{k,l-1}\cdot(1+t)^{\frac{1}{4}}$.}
\label{fig6.5}
\end{figure}

\begin{figure}
{ \centering
\includegraphics[width=0.48\textwidth]{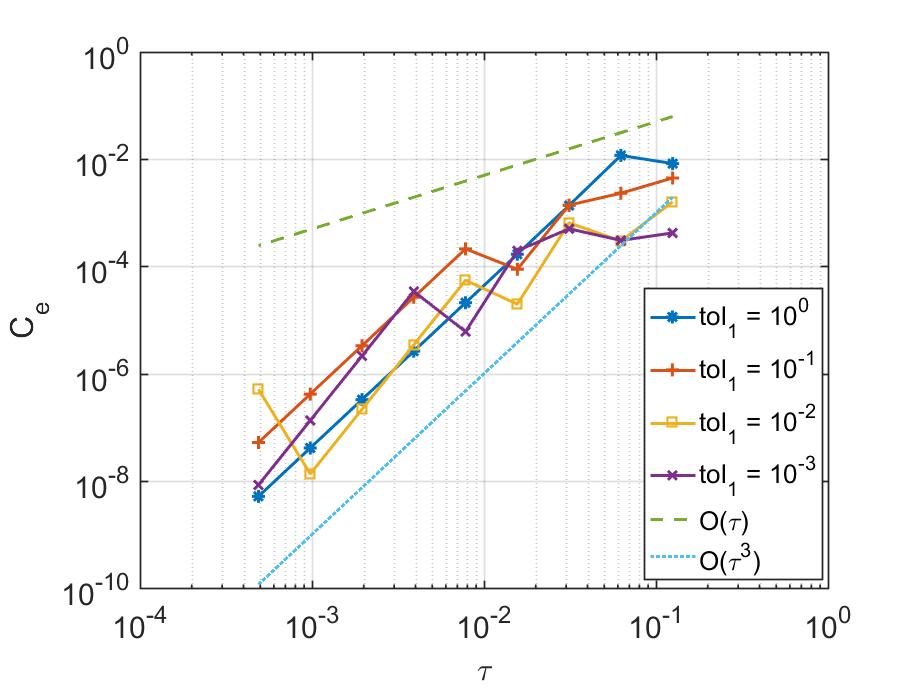}
}  { \centering
\includegraphics[width=0.48\textwidth]{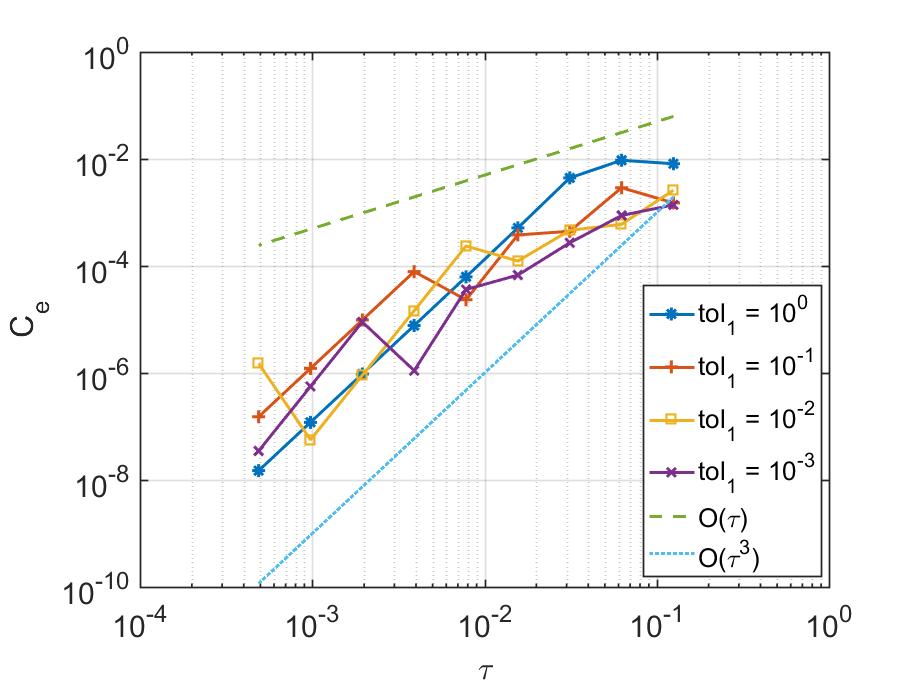}
} \caption{$C_e$ vs $\tau$. Left: time-independent case with $d=4$, $K=4$, $a=1.5$, $b=0.5$, $\gamma_k=\gamma=0.01$, $g_{kl}(t)\equiv g=1$. Right: time-dependent case with $d=6$, $K=3$, $a=1$, $b=1$, $\gamma_k=\gamma=0.05$, $g_{kl}(t)= \delta_{k,l-1}\cdot(1+t)^{\frac{1}{4}}$.}
\label{fig6.6}
\end{figure}

\FloatBarrier

\subsection{Numerical comparison}
Now, we compare integration algorithms with the Lindblad equation suite \emph{mesolve} developed in QuTip \cite{Johansson}, a widely used Python package for the simulation of open quantum systems. We consider five ODE solvers in \emph{mesolve}: \emph{adams}, \emph{bdf}, \emph{lsoda}, \emph{dop853} and \emph{vern9}, which are based on twelfth-order Adams method, fifth-order backward differentiation formula, adaptive method chosen between \emph{adams} and \emph{bdf}, Dormand and Prince's eighth-order Runge-Kutta method  and Verner's ninth-order Runge-Kutta method (see \cite{Hairer,Verner}), respectively. Note that
these ODE solvers consider the Lindblad equation in the vectorized form \eqref{equ1.2}.

In Figure \ref{fig6.7}, we compare the performance of our FREE and LREE schemes and the solvers used in QuTip with respect to accuracy and computational times measured in seconds. Note that we only report the running time of the ODE solvers. The initial condition is set to be the GHZ state ($\delta=0$ for the LREE scheme)  and relative errors are reported at $T=1$. The error tolerances of the matrix exponential algorithm and column compression method used in LREE scheme are set to be $tol_1=\frac{\tau}{20}$ and $tol_2=\frac{\tau^2}{20}$ for equation with time-independent Hamiltonian and $tol_1=\frac{\tau}{10}$ and $tol_2=\frac{\tau^2}{10}$ for equation with time-dependent Hamiltonian, respectively. We observe that the LREE scheme performs faster than the FREE scheme for a fixed level of accuracy. For a fixed level of moderate accuracy (i.e., bigger than $10^{-3}$ in the time-independent case), our LREE scheme is more efficient than the solvers in QuTip. However, for the given problems (with $m=256$), the solvers in QuTip performs faster than our exponential integrators in the high accuracy case. After all, our methods are only first-order accurate and much more steps are required to reach similar accuracy.

\begin{figure}
{ \centering
\includegraphics[width=0.48\textwidth]{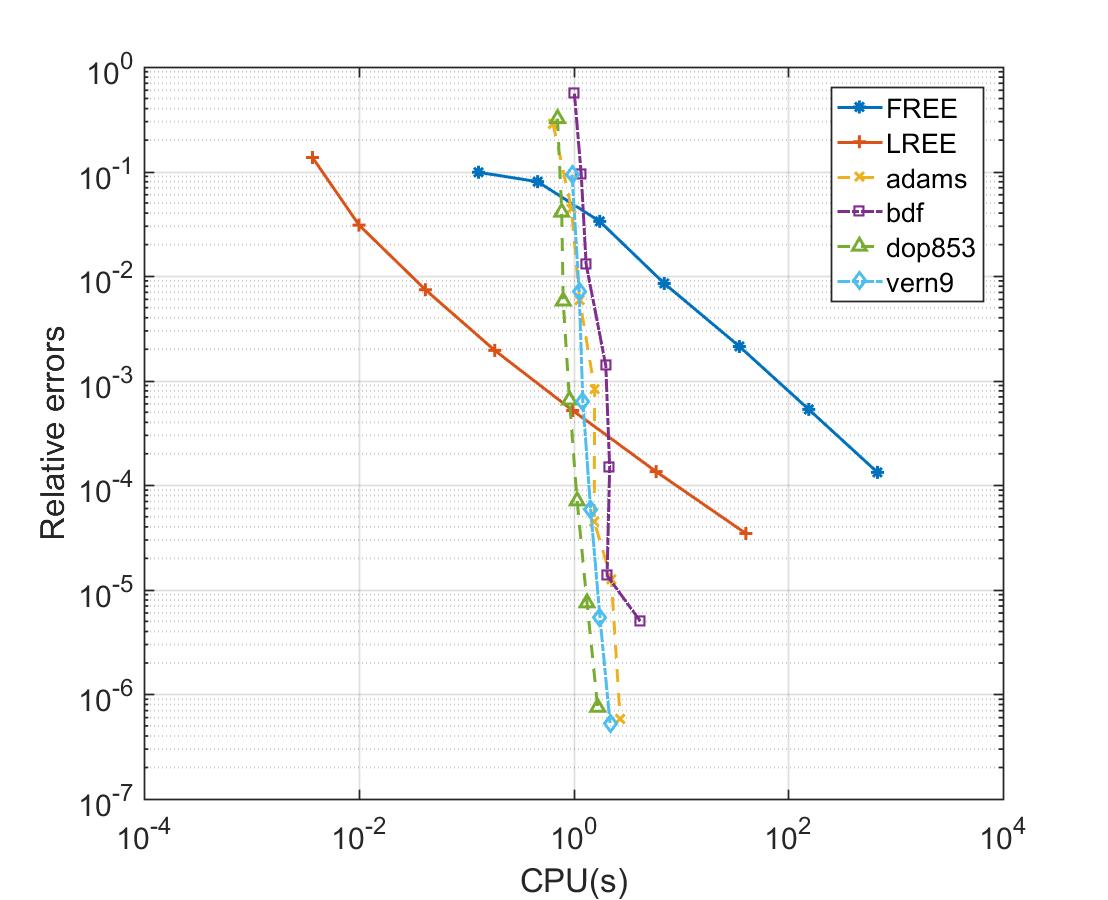}
} { \centering
\includegraphics[width=0.48\textwidth]{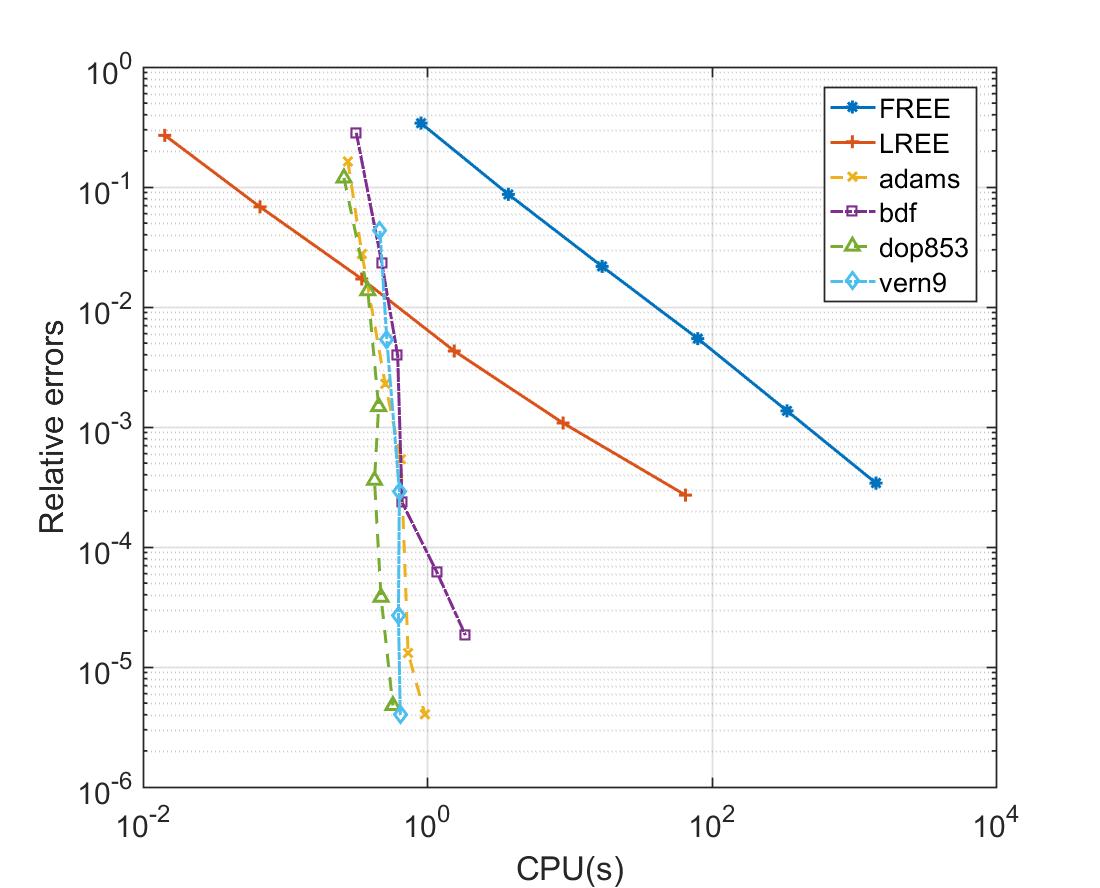}
} \caption{Numerical comparison between the proposed exponential schemes and the solvers in QuTip for the Lindblad equation with Hamiltonian \eqref{equ6.1}. Left: results for problem with $d=4$, $K=4$, $a=1.5$, $b=0.5$, $\gamma_k=\gamma=0.01$, $g_{kl}(t)\equiv g=1$. Right: results for problem with $d=4$, $K=4$, $a=1$, $b=1$, $\gamma_k=\gamma=0.05$, $g_{kl}(t)= \delta_{k,l-1}\cdot\sin(2\pi t)$.
}
\label{fig6.7}
\end{figure}

In Figure \ref{fig6.8}, we compare the computational times of our FREE and LREE schemes with those of the Lindblad solvers used in QuTip as the size $m=d^K$ of the Lindblad equation increases. We fix $K=1$ and let $d$ increase. The jump operators $L_k$ are set to be $L_k=J_x^{(k)}$. The error tolerances of the matrix exponential algorithm and column compression method used in the LREE scheme are set to be $tol_1=\frac{\tau}{10}$ and $tol_2=\frac{\tau^2}{10}$, respectively.
In this experiment, the absolute and relative tolerances of the QuTip solvers and the step sizes of the FREE and LREE schemes are chosen such that the relative errors at $T=0.1$ are approximately $10^{-3}$, see the right-hand plot in Figure \ref{fig6.8}. We see that our LREE scheme performs much faster than the QuTip Lindblad solvers for problems with high dimensions. In particular, we notice a strikingly better computational time complexity of our LREE scheme, with much smaller
change in the runtime of our algorithm with respect to increasing $m$. However, we see that the FREE scheme is less efficient than the other solvers. This is mainly due to the computation of the associated algebraic Lyapunov equations.

Note that the numerical results presented in Figures \ref{fig6.7} and \ref{fig6.8} are
with respect to problems with sparse Hamiltonian \eqref{equ6.1} and sparse jump operators. In Figure \ref{fig6.9}, we compare the computational times of our exponential schemes with those of the five Lindblad solvers used in QuTip in the case of a Lindblad equation with dense jump operator. We consider the Lindblad equation with Hamiltonian \eqref{equ6.1} and $K=1$, and the jump operator $L_1$ is chosen as a random and dense matrix. We observe that now both of our FREE and LREE schemes outperform the
five QuTip Lindblad solvers in terms of computational times. We also see that the computational times of the QuTip Lindblad solvers increase quickly as the dimension $m$ increases.

We remark that, in the dense case, the Lindblad solvers used in QuTip require to store a dense and complex $m^2 \times m^2$  matrix $\mathcal{L}$, which requires memory of $16m^4$ Bytes. On the other hand, our FREE and LREE schemes only require to store a matrix of size $m \times m$, which needs memory of $16m^2$ Bytes. For example, if $m=120$, QuTip needs nearly $3000$ MB memory while our algorithms only require $0.2$ MB memory. In the sparse case, the memory requirements of the QuTip and our exponential schemes on the coefficient matrix reduce to $\mathcal{O}(m^2)$ and $\mathcal{O}(m)$, respectively. It seems that the QuTip Lindblad \emph{lsoda} solver requires dense representation of the data matrices. So the \emph{lsoda} method was absent from the numerical comparison presented in Figures \ref{fig6.7} and \ref{fig6.8} since memory error appears for the considered dimensions.

\begin{figure}
{ \centering
\includegraphics[width=0.48\textwidth]{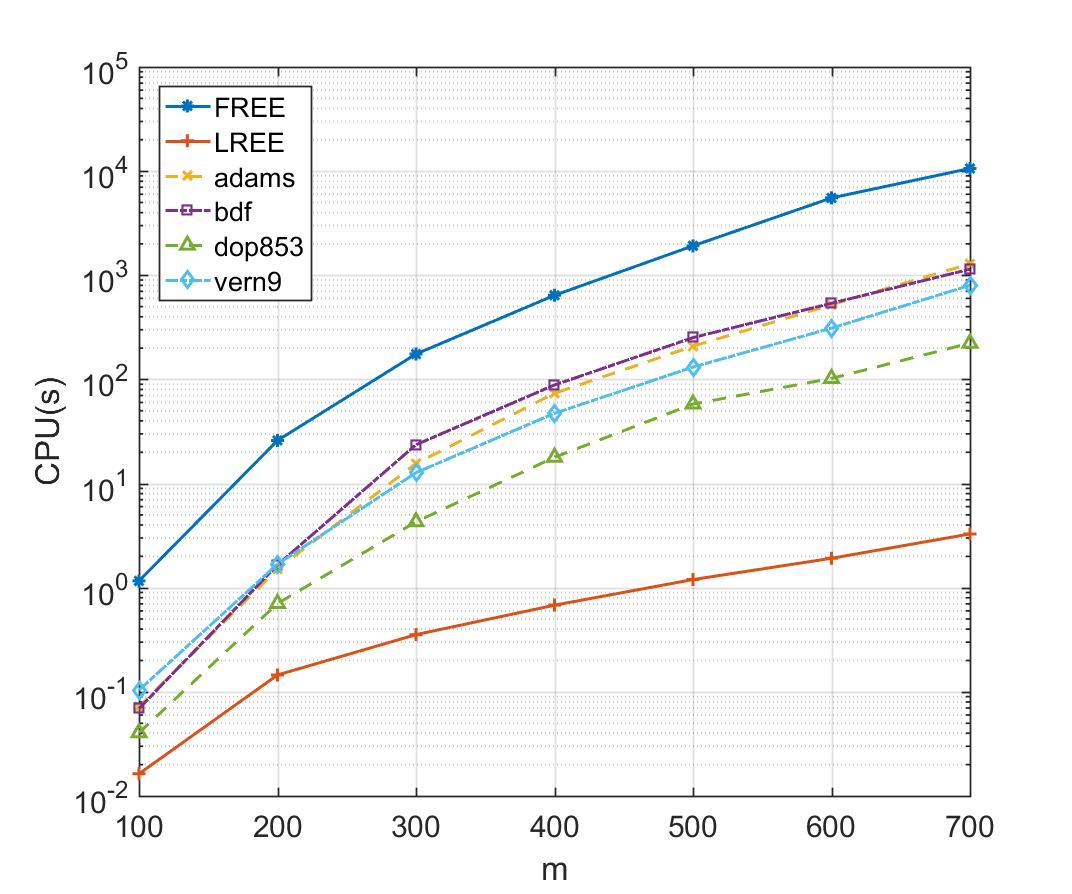}
} { \centering
\includegraphics[width=0.48\textwidth]{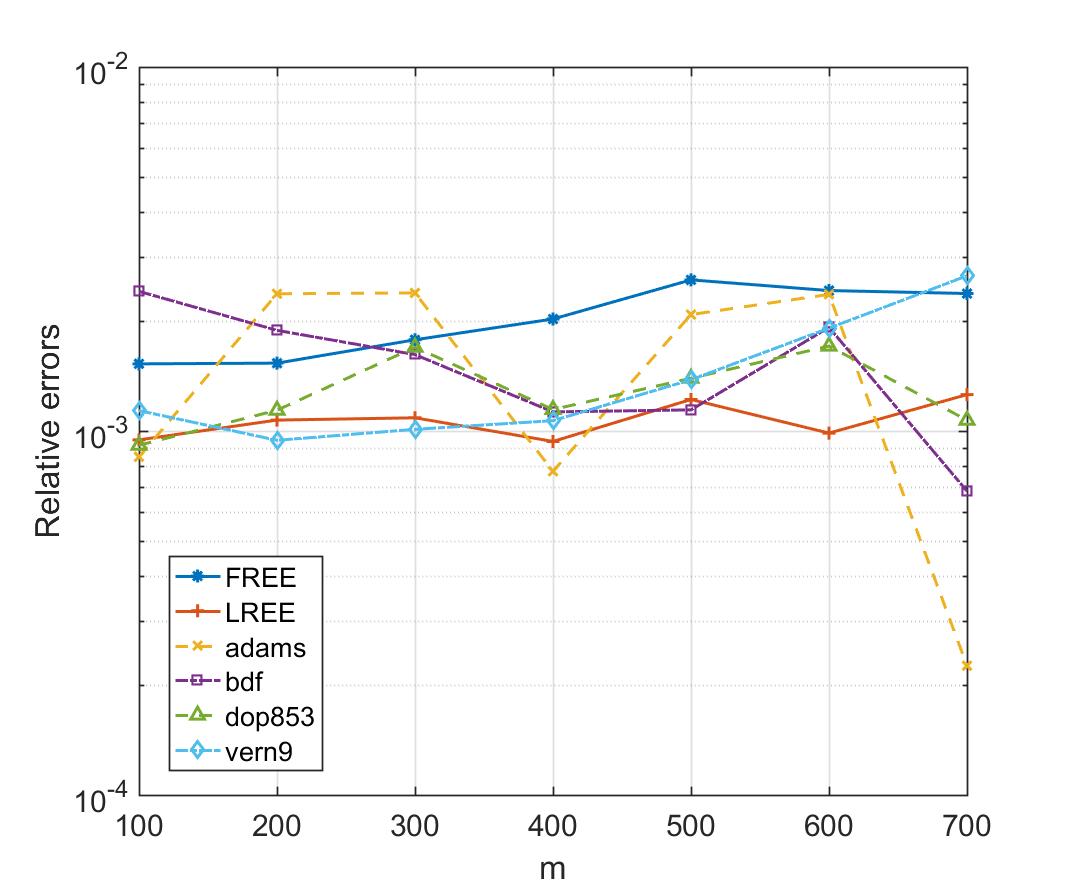}
} \caption{Numerical comparison between the proposed exponential schemes and the solvers in QuTip for the Lindblad equation with Hamiltonian \eqref{equ6.1} and jump operators $L_k=J_x^{(k)}$  ($K=1$, $a=1.5$, $b=0.5$, $\gamma_k=\gamma=0.01$, $T=0.1$). Left: CPU times vs $m$. Right: relative errors vs $m$.
}
\label{fig6.8}
\end{figure}

\begin{figure}
{ \centering
\includegraphics[width=0.48\textwidth]{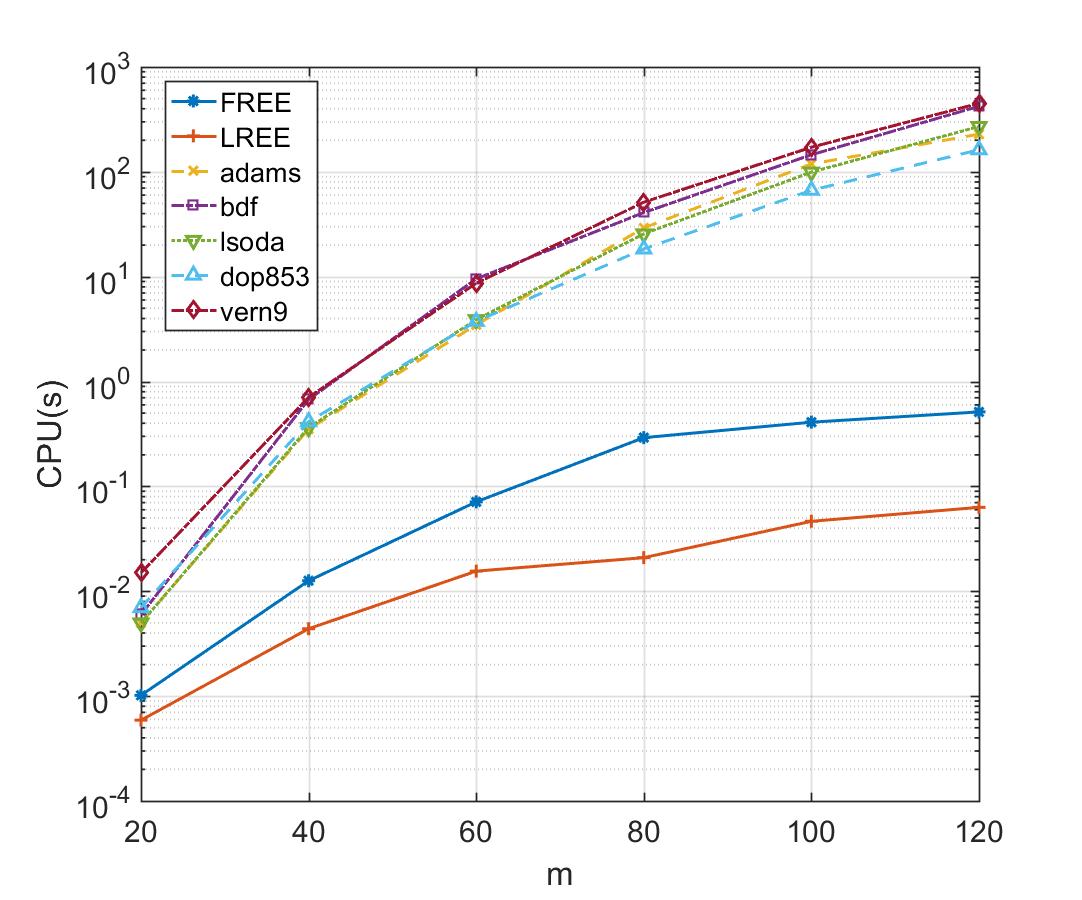}
} { \centering
\includegraphics[width=0.48\textwidth]{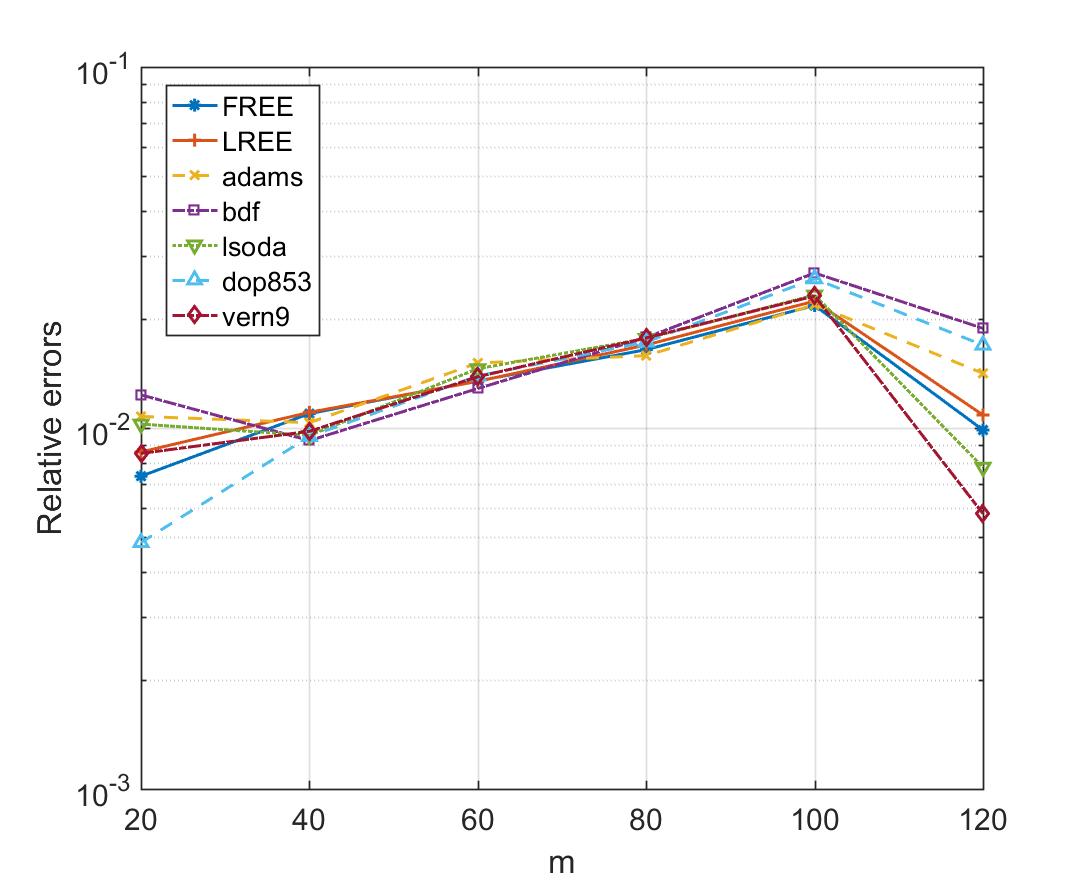}
} \caption{Numerical comparison between the proposed exponential schemes and the solvers in QuTip for the Lindblad equation with Hamiltonian \eqref{equ6.1} and random jump operators $L_k$ ($K=1$, $a=1.5$, $b=0.5$, $\gamma_k=\gamma=0.01$, $T=0.1$). Left: CPU times vs $m$. Right: relative errors vs $m$.
}
\label{fig6.9}
\end{figure}

In Figures \ref{fig6.10}-\ref{fig6.11}, we compare the performance of our exponential schemes with the five solvers in QuTip concerning the preservation of positivity and unit trace. We set
$tol_1=\frac{\tau}{10}$ and $tol_2=\frac{\tau^2}{10}$ for the LREE scheme and $\tau=0.1$ for both the FREE and LREE schemes. The absolute and relative tolerances of the \emph{adams}, \emph{bdf}, \emph{lsoda} and \emph{dop853} solvers are all set to be $10^{-3}$. The absolute and relative tolerances of the \emph{vern9} solver are set to be $10^{-2}$. We see that all the solvers can preserve unit trace.
However, we also see that, while both our FREE and LREE schemes are positivity preserving, the solvers in QuTip do not guarantee the positive semidefinite property of the density matrix, which is in agreement with the first-order barrier for
positivity of (standard) Runge-Kutta and multistep methods as discussed in \cite{Blanes2022,Martiradonna2020}.

\begin{figure}
{ \centering
\includegraphics[width=0.48\textwidth,height=0.23\textheight]{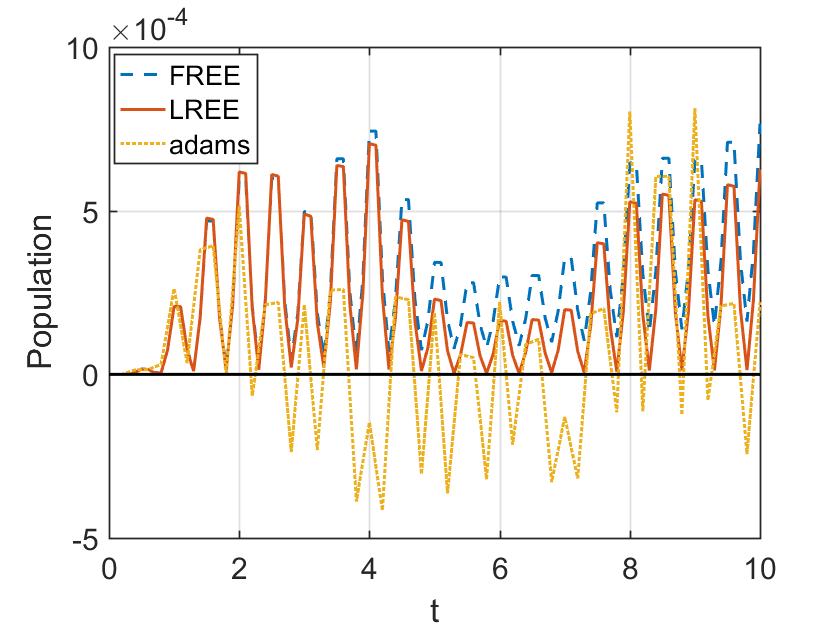}
} { \centering
\includegraphics[width=0.48\textwidth,height=0.23\textheight]{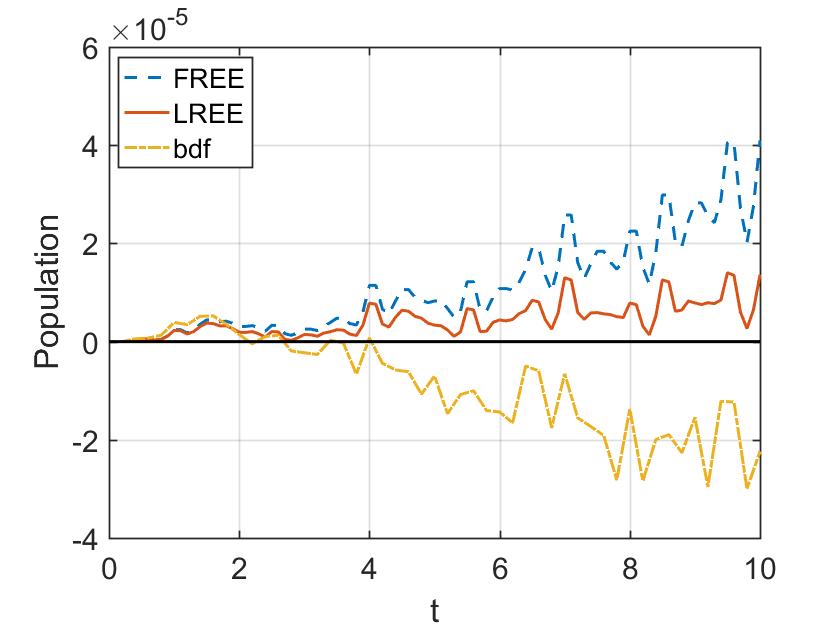}
}\\ { \centering
\includegraphics[width=0.48\textwidth,height=0.23\textheight]{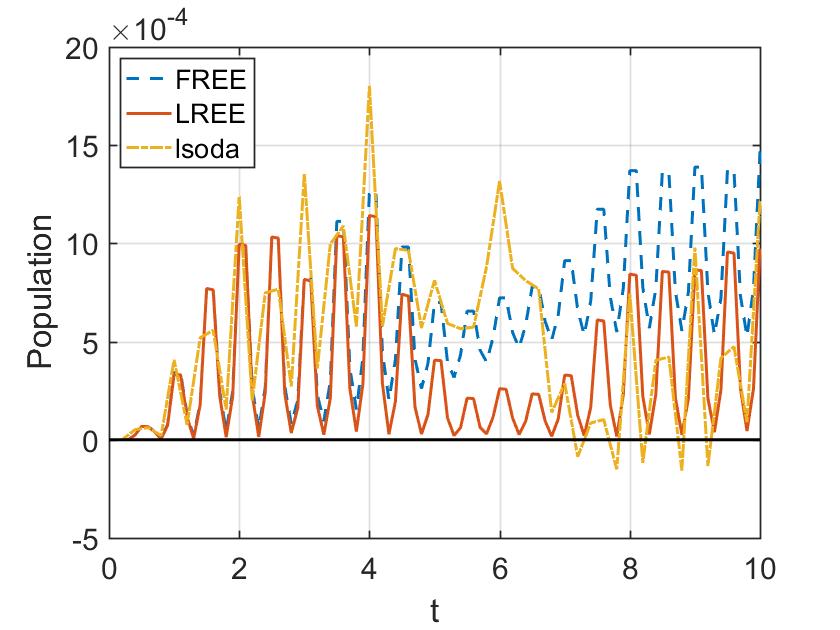}
} { \centering
\includegraphics[width=0.48\textwidth,height=0.23\textheight]{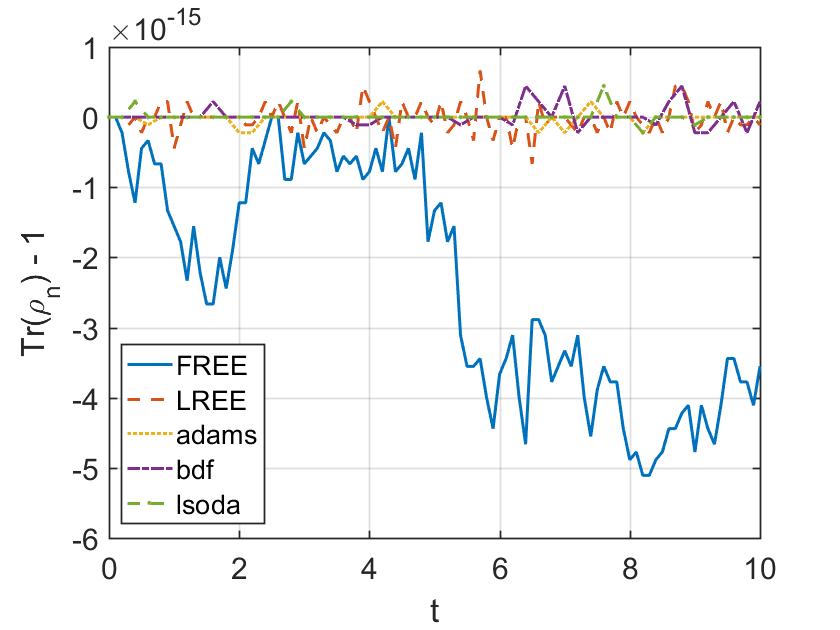}
}\caption{Numerical comparison between the proposed exponential schemes and \emph{adams}, \emph{bdf}, \emph{lsoda} solvers in QuTip for the Lindblad equation with Hamiltonian \eqref{equ6.1} and $d=4$, $K=3$, $a=1$, $b=1$, $\gamma_k=\gamma=0.05$, $g_{kl}(t)= \delta_{k,l-1}\cdot\sin(2\pi t)$. Top left: evolutions of the populations $\rho_{46,46}$. Top right:
evolutions of the populations $\rho_{45,45}$. Bottom left: evolutions of the populations $\rho_{38,38}$. Bottom right: evolutions of $\mbox{Tr}(\rho_n)-1$ vs time.}
\label{fig6.10}
\end{figure}

\begin{figure}
{ \centering
\includegraphics[width=0.48\textwidth]{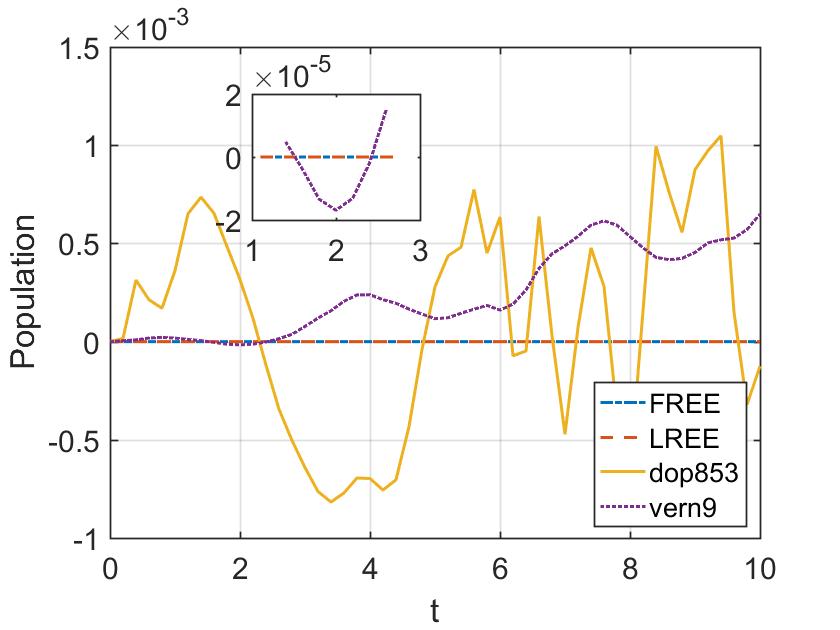}
} { \centering
\includegraphics[width=0.48\textwidth]{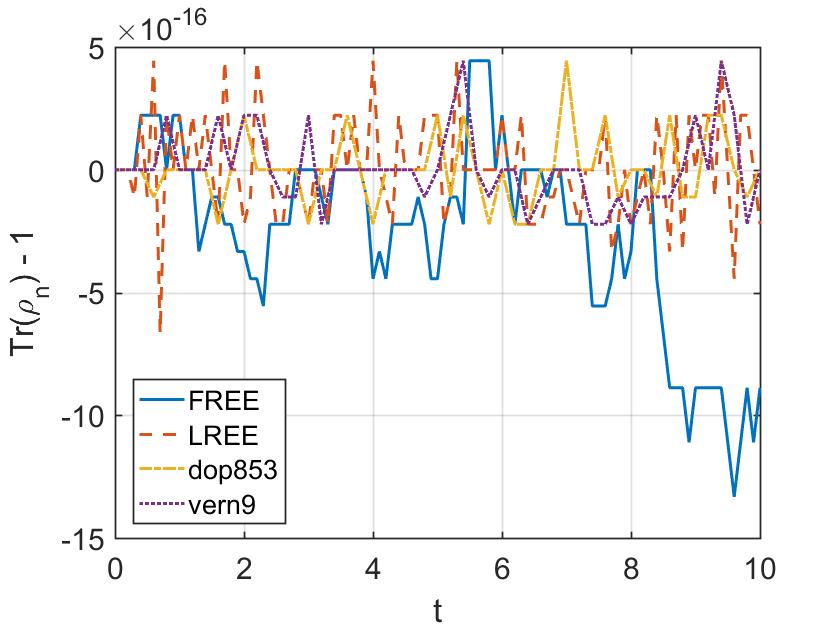}
} \caption{Numerical comparison between the proposed exponential schemes and \emph{dop853}, \emph{vern9 } solvers in QuTip for the Lindblad equation with Hamiltonian \eqref{equ6.1} and $d=4$, $K=3$, $a=1$, $b=1$, $\gamma_k=\gamma=0.05$, $g_{kl}(t)= \delta_{k,l-1}\cdot10\sin(10\pi t)$. Left: evolutions of the populations $\rho_{17,17}$. Right: evolutions of $\mbox{Tr}(\rho_n)-1$ vs time.
}
\label{fig6.11}
\end{figure}

\FloatBarrier

\section{Conclusion}
In this paper, full- and low-rank exponential Euler integrators for solving
the Lindblad equation were developed and analyzed. It was proven and numerically
validated that these schemes preserve positivity and unit trace unconditionally.
Moreover, sharp accuracy estimates were presented. The
computational performance of the new exponential Euler integrators were
successfully validated by results of numerical simulations with the Lindblad equation with various Hamiltonians, and by comparison with the Lindblad solvers available in the well-known QuTip package for simulation of open quantum systems.

\section*{Acknowledgements}
A.B. would like to thank Guofeng Zhang (Hong Kong Polytechnic University),
Xiu-Hao Deng
(Shenzhen Institute of Quantum Science and Engineering, SUSTech) and
Song Zhang (Shenzhen International Quantum Academy)
for inspiring discussions on open quantum systems and very kind hospitality.

\hspace*{\fill}

\noindent\small \textbf{Funding} ~~ The work of Hao Chen was partly supported by the Natural Science Foundation Project of CQ CSTC (No. cstc2021jcyj-msxmX0034).

\bibliographystyle{unsrt}
\bibliography{literature}

\end{document}